\newcommand{\R}{\mathbb{R}}
\newcommand{\I}{\mathbbm{1}}
\newcommand{\Var}{\mathrm{Var}}
\newcommand{\cov}{\mathrm{cov}}
\newcommand{\Ber}{\mathtt{Bernoulli}}
\newcommand{\CM}{\mathrm{CM}}
\newcommand{\Address}{{
\bigskip
\footnotesize
\textsc{Institute for Mathematical Stochastics, University of G\"ottingen, G\"ottingen, 37077, Germany }\par\nopagebreak
\textit{E-mail address}: \texttt{santiago.arenasvelilla@uni-goettingen.de} \\
  
\textsc{Centro de Investigaci\'{o}n en Matem\'{a}ticas, Guanajuato, Gto. 36000, Mexico}\par\nopagebreak
\textit{E-mail address}: \texttt{octavius@cimat.mx} \\

\textsc{Department of Mathematics \& Statistics, McMaster University, Hamilton, ON, L8S 4K1, Canada}\par\nopagebreak
\textit{E-mail address}: \texttt{paguyoj@mcmaster.ca}
}}
\def\bal#1\eal{\begin{align*}#1\end{align*}}
\newtheorem{theorem}{Theorem}[section]
\newtheorem{lemma}[theorem]{Lemma}
\newtheorem{proposition}[theorem]{Proposition}
\newtheorem{rem}[theorem]{Remark}
\title{Central limit theorem for crossings in randomly embedded graphs}
\author{Santiago Arenas-Velilla, Octavio Arizmendi, and J. E. Paguyo} 
\date{}
\subjclass[2020]{60C05, 60F05}
\keywords{random graphs, crossings, size-bias coupling, Stein's method, central limit theorem, convergence rates}
\begin{document}


\begin{abstract}
We consider the number of crossings in a random embedding of a graph, $G$, with vertices in convex position. 
We give explicit formulas for the mean and variance of the number of crossings as a function of various subgraph counts of $G$. 
Using Stein's method and size-bias coupling, we prove an upper bound on the Kolmogorov distance between the distribution of the number of crossings and a standard normal random variable.
As an application, we establish central limit theorems, along with convergence rates, 
for the number of crossings in random matchings, path graphs, cycle graphs, and the disjoint union of triangles. 
\end{abstract}

\maketitle


\section{Introduction}

Let $G = (V,E)$ be a graph with vertex set $V = [n] := \{1,\ldots, n\}$ and edge set $E$. Let $\pi \in S_n$ be a permutation on $[n]$. 
An {\em embedding} of $G$ is the graph, $G_\pi$, obtained from the graph isomorphism induced by $\pi$, so that $(v,w) \in E$ is an edge in $G$ if and only if $(\pi(v), \pi(w))$ is an edge in $G_\pi$. 
A {\em crossing} in an embedding $G_\pi$ is a quadruple $(a,b,c,d)$ such that $(a,b), (c,d) \in E$ and $\pi(a) > \pi(c) > \pi(b) > \pi(d)$. 
For example, Figure \ref{pathfig_example} shows two possible embeddings of the path graph $P_{20}$. The first embedding has $40$ crossings, while the second has $60$ crossings. 

Let $\pi \in S_n$ be a uniformly random permutation. A {\em random embedding} of $G$ is the random graph, $G_\pi$, obtained from the graph isomorphism induced by the random permutation $\pi$. 
We also refer to the random graph obtained through this random embedding as a {\em randomly embedded graph}. 

In this paper, we study the distribution of the number of crossings, $X$, in a random embedding of the graph $G$, where the vertices are assumed to be in convex position, or on a circle, for practical purposes. We compute explicit formulas for the mean and variance of $X$ in terms of subgraphs of certain types. 
We prove an upper bound on the Kolmogorov distance between the distribution of $X$ and a standard normal random variable. 
We apply this result to show the asymptotic normality, along with convergence rates, of the number of crossings in several families of random graphs.
 
\begin{figure}[ht]
\centering
\begin{subfigure}[b]{0.45\linewidth}
\includegraphics[width=150pt, height=150pt]{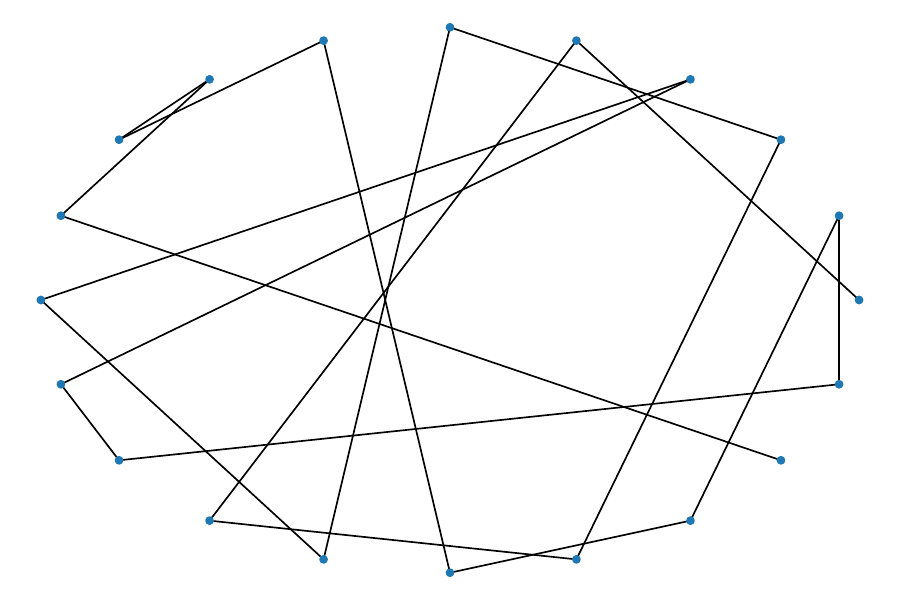}
\caption{$P_{20}$ with 40 crossings}
\label{P_297}
\end{subfigure}
\begin{subfigure}[b]{0.45\linewidth}
\includegraphics[width=150pt, height=150pt]{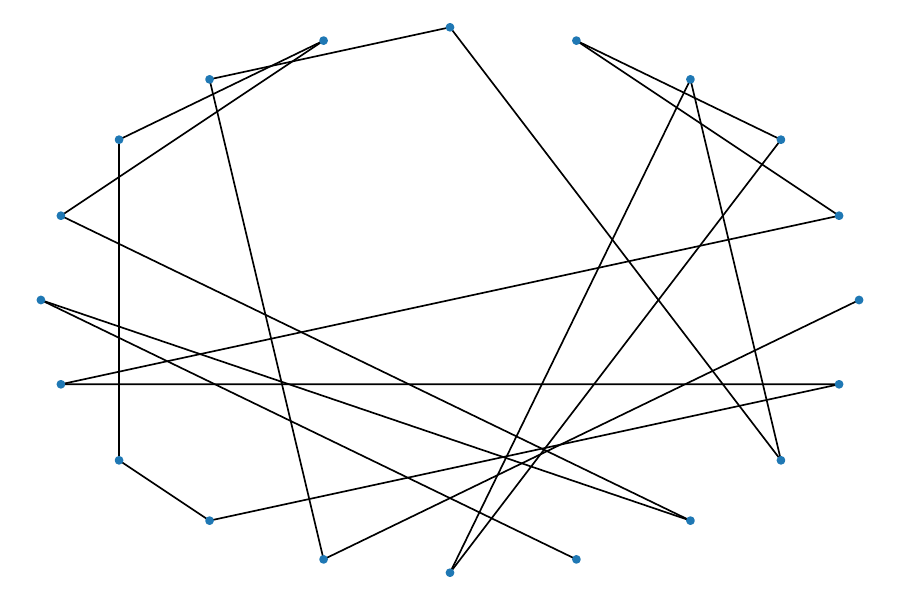}
\caption{$P_{20}$ with 60 crossings}
\label{P_383}
\end{subfigure}
\caption{Two possible embeddings of the path graph on $20$ vertices.}
\label{pathfig_example}
\end{figure}

There has been some previous work on the number of crossings in random graphs. 
One of the earliest results is due to Moon \cite{Moo65}, who showed that the number of crossings in a complete graph embedded randomly on a sphere is asymptotically normal. 
Riordan \cite{Rio75} considered random {\em chord diagrams}, graphs obtained by pairing $2n$ points on a circle, where each pairing corresponds to a chord, 
and found an explicit formula in the form of an alternating sum for the number of chord diagrams with exactly $k$ crossings. 
Around twenty years later, Flajolet and Noy \cite{FN00} used analytic combinatorics to prove the asymptotic normality of the number of crossings in a random chord diagram. 
This result was also recently proven by Feray as an application of his theory of weighted dependency graphs \cite{Fer18}. 
Using cumulants, Arizmendi, Cano, and Huemer \cite{ACH19} showed that the number of crossings in a random labeled tree is asymptotically normal, 
and Arenas-Velilla and Arizmendi \cite{AA23} gave a quantitative version. 
More recently Chern, Diaconis, Kane, and Rhoades \cite{CDKR15} established the asymptotic normality of the number of crossings in a uniformly random set partition of $[n]$. 
Subsequently, Feray \cite{Fer20} generalized this result and used his theory of weighted dependency graphs to prove that the number of occurrences of 
any fixed pattern in random multiset permutations and random set partitions is asymptotically normal.

One motivation for the study of crossings in random graphs is the notion of the {\em crossing number} of a graph $G$, which is the smallest number of edge crossings in any plane drawing of $G$. 
The study of crossing numbers originated from Turan's brick factory problem \cite{Tur77}, which asked for the factory plan which minimized the number of crossings between tracks connecting brick kilns and storage sites. 
This is equivalent to determining the crossing number of a complete bipartite graph. The crossing number has been studied for various graphs and graph families. 
For example, Spencer and Toth \cite{ST02} initiated the study of the crossing number of random graphs and proved several results on its expected value. 
However, the crossing number remains unknown except for a few special cases. In fact, Garey and Johnson \cite{GJ83} showed that determining the crossing number of a graph is an NP-hard problem. 
We refer the reader to Schaefer's survey \cite{Sch13} for the history, previous results, and variants on the crossing number. 


\subsection{Main Result}

Let $\mu$ and $\nu$ be probability distributions. The {\em Kolmogorov distance} between $\mu$ and $\nu$ is 
\bal
d_K(\mu, \nu) := \sup_{x\in \R} |\mu(-\infty, x] - \nu(-\infty, x]|.
\eal
If $X$ and $Y$ are random variables with distributions $\mu$ and $\nu$, respectively, then we write $d_K(X,Y)$ to denote the Kolmogorov distance between the distributions of $X$ and $Y$. 

Our main result is an upper bound on the Kolmogorov distance between the distribution of the number of crossings in a random embedding of $G$ and a standard normal random variable. 
The proof uses Stein's method with size-bias coupling. 

\begin{theorem} \label{maintheorem}
Let $G$ be a graph on $n$ vertices and let $X$ be the number of crossings in a uniformly random embedding of $G$. Let $W = \frac{X - \mu}{\sigma}$, where $\mu = E(X)$ and $\sigma^2 = \Var(X)$. 
Then 
\bal
d_K(W, Z) \leq \frac{4 \Delta m m_2(G)}{3\sigma^2} \left( \frac{6 \Delta m}{\sigma} + \sqrt{1 + \left( \frac{32 \Delta}{m} - 1 \right)\frac{6m_4(G)}{m_2(G)^2} + \frac{\Delta^2 m}{2m_2(G)} }\right),
\eal
where $m$ is the number of edges of $G$, $m_r(G)$ is the number of $r$-matchings of $G$, $\Delta$ is the maximum degree of $G$, and $Z$ is a standard normal random variable.
\end{theorem}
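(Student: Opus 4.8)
The plan is to apply the standard size-bias coupling version of Stein's method for the Kolmogorov distance. Recall that $X = \sum_{\alpha} I_\alpha$, where the sum runs over all potential crossings $\alpha = \{(a,b),(c,d)\}$ (unordered pairs of edges of $G$) and $I_\alpha$ is the indicator that $\alpha$ is realized as a crossing under the random embedding $\pi$. The key structural fact is that each $I_\alpha$ depends only on the relative order of the four images $\pi(a),\pi(b),\pi(c),\pi(d)$, so $E(I_\alpha) = 1/3$ (of the $4!$ orders, exactly $8$ give a crossing of the form in the definition, but after accounting for the unordered nature one gets the crossing probability $1/3$), and more importantly the dependency structure is local in the sense that $I_\alpha$ and $I_\beta$ are independent whenever $\alpha$ and $\beta$ involve six distinct vertices. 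The first step is to construct the size-bias coupling $X^s$ of $X$: pick a potential crossing $\alpha$ with probability proportional to $E(I_\alpha)$ (i.e.\ uniformly, since all these are equal), re-randomize the embedding conditioned on $\alpha$ being a crossing, and set $X^s$ to be the number of crossings in that conditioned configuration. One then needs the bound on the coupling from the Stein's-method toolbox: for $W = (X-\mu)/\sigma$ with a bounded size-bias coupling,
\[
d_K(W,Z) \le \frac{\mu}{\sigma^2}\sqrt{\Var\bigl(E[X^s - X \mid X]\bigr)} \;+\; \frac{\mu}{\sigma^3}\, E\bigl|X^s - X\bigr|^2 \;+\; (\text{small terms}),
\]
or whichever precise form of the bounded-coupling theorem the authors are using; the shape of the claimed inequality (a prefactor times the sum of a $1/\sigma$ term and a square-root term) strongly suggests exactly this.

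The second step is to control $|X^s - X|$ almost surely. When we force $\alpha = \{(a,b),(c,d)\}$ to be a crossing, the only indicators $I_\beta$ that can change are those sharing a vertex with $\{a,b,c,d\}$. Each vertex lies on at most $\Delta$ edges, so there are $O(\Delta m)$ such edges $\beta$-partners, and hence $|X^s - X| \le C\Delta m$ for an explicit small constant; the factor $6\Delta m/\sigma$ inside the parentheses is precisely $\|X^s - X\|_\infty/\sigma$ up to constants. This gives the first term in the parentheses. The third step — the genuinely delicate one — is to estimate $\Var\bigl(E[X^s - X\mid \mathcal F]\bigr)$, where $\mathcal F$ is the relevant conditioning sigma-algebra (typically generated by $\pi$, or by $\alpha$ together with the coupling). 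One writes $X^s - X$ as a sum over the $O(\Delta m)$ affected indicators of their change, takes conditional expectations, and then bounds the variance of this sum. Expanding the variance produces covariances of pairs of local contributions; these are nonzero only when the two contributions share vertices, and the combinatorics of counting such overlapping pairs is exactly what produces the subgraph counts $m_2(G)$ (pairs of disjoint edges / $2$-matchings), $m_4(G)$ ($4$-matchings), and the degree factors $\Delta, \Delta^2$ appearing under the square root. I expect the bookkeeping here — organizing the overlapping configurations by how many vertices and edges they share, and matching each configuration class to the right matching-count $m_r(G)$ — to be the main obstacle and the part requiring the variance formula derived earlier in the paper (so that $\sigma^2$ can be written in terms of $m$ and $m_2(G)$ and the error terms normalized accordingly).

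Finally I would assemble the pieces: substitute the almost-sure bound $|X^s-X| \le C\Delta m$ and the variance estimate (a polynomial in $\Delta, m$ with coefficients given by $m_2(G), m_4(G)$) into the Stein bound, factor out $\Delta m\, m_2(G)/\sigma^2$, and simplify the remaining expression into the stated form, using $\mu = \Theta(m_2(G))$ and the explicit variance formula to replace $\mu/\sigma^2$-type ratios. The constants $4/3$, $6$, $32$, and $1/2$ should fall out of this simplification once the crossing probability $1/3$ and the precise second-moment computations are inserted; I would not grind through these but would track them carefully since they appear in the final statement. One subtlety to watch: the term $\bigl(\tfrac{32\Delta}{m}-1\bigr)$ can be negative when $\Delta$ is small relative to $m$, so the square-root argument must be verified to stay nonnegative using the other (positive) terms, or else the bound should be read as vacuous in that regime — I would address this with a short remark rather than a separate case analysis.
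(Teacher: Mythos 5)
Your overall strategy matches the paper's: write $X$ as a sum of crossing indicators over $2$-matchings, build the size-bias coupling by choosing a $2$-matching uniformly and re-randomizing so that it crosses, apply the bounded size-bias Kolmogorov bound $d_K(W,Z)\le \frac{6\mu A^2}{\sigma^3}+\frac{2\mu}{\sigma^2}\sqrt{\Var(E[X^s-X\mid X])}$ with $A\le 2\Delta(m-1)$, and then control the variance term. Up to that point the proposal is sound.

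The genuine gap is in the variance term, which is the heart of the proof and which you leave as a heuristic. Two problems. First, the structure: $E[X^s-X\mid X]=\frac{1}{m_2(G)}\sum_{i\in M_2(G)}(X^{(i)}-X)$, so its variance is the double sum $\frac{1}{m_2(G)^2}\sum_{i,j}\cov\bigl(X^{(i)}-X,\,X^{(j)}-X\bigr)$ over pairs of \emph{conditioning indices}, not the variance of a single sum of local increments as your sketch suggests. Second, and more seriously, your guiding principle that the covariances "are nonzero only when the two contributions share vertices" is false at the level of the indices $i,j$: even when $V(i)\cap V(j)=\emptyset$, the increments $X^{(i)}-X$ and $X^{(j)}-X$ are sums over all $2$-matchings $k$ touching $i$ (resp.\ $\ell$ touching $j$), and a single $k$ can touch both $i$ and $j$, so these covariances do not vanish. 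If you handle all $6m_4(G)$ vertex-disjoint pairs with the trivial bound $4\Delta^2(m-1)^2$, the resulting estimate is of order $\Delta^2m^2$, and in the paper's examples (e.g.\ matchings, where $\sigma^2\asymp n^3$ and $\mu\asymp n^2$) the term $\frac{2\mu}{\sigma^2}\sqrt{\Var(\cdots)}$ is then $O(1)$ rather than $o(1)$ --- the theorem becomes vacuous. The paper's essential idea, absent from your proposal, is that for $i,j$ with disjoint vertex sets and disjoint edge-neighborhoods one expands $\cov(X^{(i)}-X,X^{(j)}-X)$ as a sum over pairs $(k,\ell)$ and shows that only pairs with $V(k)\cap V(i)\neq\emptyset$, $V(\ell)\cap V(j)\neq\emptyset$, and additionally $V(k)\cap V(j)\neq\emptyset$ or $V(\ell)\cap V(i)\neq\emptyset$ can contribute; counting these gives $O(\Delta^3 m)$ rather than $O(\Delta^2m^2)$, which is precisely the source of the $\frac{32\Delta}{m}\cdot\frac{6m_4(G)}{m_2(G)^2}$ term in the statement. (Your closing worry about $\frac{32\Delta}{m}-1$ being negative is harmless: since $6m_4(G)\le m_2(G)^2$, the quantity under the square root is always at least $\frac{\Delta^2 m}{2m_2(G)}>0$.)
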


Observe that this implies a central limit theorem, along with a rate of convergence, for the number of crossings in a random embedding, provided that the upper bound on the Kolmogorov distance converges to zero as $n$ goes to infinity. 


\subsection{Outline}

The paper is organized as follows. 
Section \ref{Preliminaries} introduces the notation and definitions that are used throughout the paper, and gives necessary background and results on size-bias coupling and Stein’s method.

In Section \ref{MeanVariance}, we provide explicit formulas for the mean and variance of the the number of crossings in a random embedding. 
In Section \ref{CLTCrossings}, we construct a size-bias coupling and use this with Stein's method to prove Theorem \ref{maintheorem}, the central limit theorem for the number of crossings. 

In Section \ref{Examples}, we apply Theorem \ref{maintheorem} to prove central limit theorems, with convergence rates, for the number of crossings in matchings, path graphs, cycle graphs, 
and the disjoint union of $n$ triangles. Section \ref{AnotherLimit} discusses another possible limit that can arise for certain graphs. 

We conclude the paper with some final remarks and open questions in Section \ref{FinalRemarks}.


\section{Preliminaries} \label{Preliminaries} 

In this section, we establish the notation and definitions that we use throughout the paper. We also discuss size-bias coupling and Stein's method, 
the main tool that we use to prove our central limit theorem. 

\subsection{Notation and Definitions on Graphs} \label{notationanddefinitions}

A {\em graph} is a pair $G = (V,E)$, where $V$ is the set of vertices and $E \subseteq \{ (v,w) : v,w \in V\}$ is the set of edges. 
We sometimes write the edge $(v,w)$ as $v \sim_G w $, or as $v \sim w$ if the underlying graph $G$ is clear. We let $n = |V|$ denote the number of vertices and $m = |E|$ the number of edges.

Fix a vertex $v$. Vertex $w$ is a {\em neighbor} of $v$, if $(v,w)\in E$. The number of neighbors of $v$ is the {\em degree} of $v$, denoted by $\deg(v)$. 
The {\em maximum degree} of a graph $G$ is defined as $\Delta := \Delta(G) = \max_{v \in V} \deg(v)$. 

A {\em subgraph} of $G$ is a graph $H=(W,F)$, such that $W \subseteq V$ and $F \subseteq G$. 
Two graphs $G = (V,E)$ and $G' = (V',E')$ are {\em isomorphic} if there exist a bijection $\varphi : V \to V'$ such that $(u , v) \in E $ if and only if $(\varphi(u) , \varphi(v)) \in E'$, 
for all $u,v \in V$. 

An \textit{$r$-matching} in a graph $G$ is a set of $r$ edges in $G$, no two of which have a vertex in common. 
We let $M_r(G)$ be the set $r$-matchings of $G$ and $m_r(G) := |M_r(G)|$ the number of $r$-matchings of $G$. Note that $m_1(G) = m$ corresponds to the number of edges of $G$. 

\subsection{Size-Bias Coupling and Stein's Method} \label{sizebiascouplingSteinmethod}

Let $X$ be a non-negative random variable with finite non-zero mean $\mu$. Then $X^s$ has the {\em size-bias distribution} of $X$ if for all $f$ such that $E(Xf(X)) < \infty$, we have
\bal
E(Xf(X)) = \mu E(f(X^s)). 
\eal
A {\em size-bias coupling} is a pair $(X, X^s)$ of random variables defined on the same probability space such that $X^s$ has the size-bias distribution of $X$. 
To prove our main theorem, we will use the following size-bias coupling version of Stein's method. 

\begin{theorem}[\cite{CGS11}, Theorem 5.6] \label{sizebiasStein}
Let $X \geq 0$ be a random variable with finite non-zero  mean $\mu$ and positive, finite variance $\sigma^2$. Suppose $(X, X^s)$ is a bounded size-bias coupling, so that $|X^s - X| \leq A$ for some $A$. 
If $W = \frac{X - \mu}{\sigma}$ and $Z$ is a standard normal random variable, then 
\bal
d_K(W,Z) \leq \frac{6 \mu A^2}{\sigma^3} + \frac{2\mu}{\sigma^2} \sqrt{\Var(E[X^s - X \mid X])}.
\eal
\end{theorem}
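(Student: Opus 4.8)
The plan is to prove the bound through Stein's method for the Kolmogorov distance, combined with the defining identity of the size-bias coupling. For each fixed $x \in \R$, let $f = f_x$ denote the bounded solution of the Stein equation $f'(w) - w f(w) = \I\{w \le x\} - \Phi(x)$, where $\Phi$ is the standard normal distribution function. The standard estimates for this solution give $0 \le f_x(w) \le \sqrt{2\pi}/4$, the derivative bound $\|f_x'\|_\infty \le 1$, and control on the oscillation of $w \mapsto w f_x(w)$. Evaluating the Stein equation at $W$ and taking expectations yields $P(W \le x) - \Phi(x) = E[f'(W) - W f(W)]$, so it suffices to bound $|E[f'(W) - W f(W)]|$ uniformly in $x$.

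The second step is to rewrite $E[W f(W)]$ using the size-bias transform. Writing $W^s = (X^s - \mu)/\sigma$ and applying the identity $E[X g(X)] = \mu E[g(X^s)]$ with $g(X) = f((X-\mu)/\sigma)$ gives $E[W f(W)] = \frac{\mu}{\sigma} E[f(W^s) - f(W)]$. I would then expand $f(W^s) - f(W) = \int_0^{\hat W} f'(W + t)\,dt$, where $\hat W := W^s - W = (X^s - X)/\sigma$ satisfies $|\hat W| \le A/\sigma$ by the bounded coupling hypothesis. The crucial normalization is the identity $E[X^s - X] = \sigma^2/\mu$, which makes $\frac{\mu}{\sigma} E[\hat W] = 1$, so that the leading contributions of $E[f'(W)]$ and $\frac{\mu}{\sigma}E[\int_0^{\hat W} f'(W)\,dt]$ cancel in expectation. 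Conditioning on $X$ and introducing $T := E[X^s - X \mid X]$, so that $E[\hat W \mid W] = T/\sigma$, elementary algebra organizes the error into
\bal
E[f'(W) - W f(W)] = E\Big[f'(W)\big(1 - \tfrac{\mu}{\sigma^2}T\big)\Big] + \frac{\mu}{\sigma} E\Big[\int_0^{\hat W}\big(f'(W) - f'(W+t)\big)\,dt\Big].
\eal

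For the first term I would use $\|f'\|_\infty \le 1$ together with $E[1 - \frac{\mu}{\sigma^2}T] = 0$ to bound it by $E\big|1 - \frac{\mu}{\sigma^2}T\big| \le \sqrt{\Var(\frac{\mu}{\sigma^2}T)} = \frac{\mu}{\sigma^2}\sqrt{\Var(T)}$, which produces (part of) the $\sqrt{\Var(E[X^s - X \mid X])}$ contribution. The remainder term is where the discontinuity of the Kolmogorov Stein solution must be confronted: substituting $f'(w) = w f(w) + \I\{w \le x\} - \Phi(x)$ splits $f'(W) - f'(W+t)$ into a Lipschitz piece coming from $w \mapsto w f(w)$ and an indicator piece $\I\{W \le x\} - \I\{W + t \le x\}$. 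The Lipschitz piece is controlled by the oscillation bound on $w f(w)$ and the increment $|t| \le A/\sigma$, yielding a term of order $\frac{\mu A^2}{\sigma^3}$.

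The main obstacle is the indicator piece, $\frac{\mu}{\sigma}E\big[\int_0^{\hat W}(\I\{W \le x\} - \I\{W+t \le x\})\,dt\big]$, which measures how often $W$ lands within distance $A/\sigma$ of the threshold $x$. Handling it requires a concentration estimate bounding $P(x - A/\sigma < W \le x)$ by a quantity of order $A/\sigma$ together with a further $\sqrt{\Var(T)}$ correction; such an inequality is itself established by a Stein-type self-bounding argument exploiting the bounded coupling. Assembling the Lipschitz estimate, the concentration estimate, and the first-term bound, and tracking the numerical constants through the Stein factor bounds $\|f\|_\infty \le \sqrt{2\pi}/4$ and $\|f'\|_\infty \le 1$, yields the claimed coefficients $6$ and $2$ and completes the proof after taking the supremum over $x$. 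I expect the concentration estimate to be the delicate step, since it is precisely the feature that distinguishes the Kolmogorov bound from the easier Wasserstein bound, where the smoothness of the test functions removes the indicator difficulty entirely.
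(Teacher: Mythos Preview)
The paper does not prove this statement: it is quoted verbatim as Theorem~5.6 of \cite{CGS11} and used as a black box in the proof of Theorem~\ref{maintheorem}. There is therefore no ``paper's own proof'' to compare against.

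That said, your sketch is a faithful outline of the standard argument in \cite{CGS11}. The decomposition into the variance term $E[f'(W)(1-\tfrac{\mu}{\sigma^2}T)]$ and the remainder integral is exactly the structure there, and you correctly identify the key difficulty specific to the Kolmogorov metric: the indicator piece $\I\{W\le x\}-\I\{W+t\le x\}$ requires a separate concentration inequality of the form
\[
P\bigl(a\le W\le b\bigr)\le (b-a)\Bigl(\frac{\mu}{\sigma}+\frac{\sqrt{2\pi}\mu A}{4\sigma^2}\Bigr)+\frac{2\mu}{\sigma^2}\sqrt{\Var(T)},
\]
established by a self-bounding Stein argument (this is Lemma~5.2/5.1 in \cite{CGS11}). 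Your description of this step as ``the delicate step'' is accurate, and the rest of the outline would go through to give the stated constants. So the proposal is correct in substance, but for the purposes of this paper the theorem is simply cited, not re-proved.
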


Observe that this implies a central limit theorem if both error terms on the right hand side go to zero. 

\subsection{Size-Bias Coupling Construction} \label{CouplingConstruction}

We now turn to a method of coupling a random variable $X$ to a size-biased version $X^s$. In the case that $X = \sum_{i=1}^n X_i$, where $X_i \geq 0$ and $E(X_i) = \mu_i$, 
we can use the following recipe provided in \cite{Ros11} to construct a size-biased version of $X$. 

\begin{enumerate}
\item For each $i \in [n]$, let $X_i^s$ have the size-bias distribution of $X_i$ independent of $(X_k)_{k \neq i}$ and $(X_k^s)_{k \neq i}$. 
Given $X_i^s = x$, define the vector $(X_k^{(i)})_{k \neq i}$ to have the distribution of $(X_k)_{k \neq i}$ conditional on $X_i = x$. 
\item Choose a random summand $X_I$, where the index $I$ is chosen, independent of all else, with probability $P(I = i) = \mu_i/\mu$, where $\mu = EX$. 
\item Define $X^s = \sum_{k \neq I} X_k^{(I)} + X_I^s$. 
\end{enumerate}
If $X^s$ is constructed by Items (1)-(3) above, then $X^s$ has the size-bias distribution of $X$. 

As a special case, note that if $X_i$ is an indicator random variable, then its size-bias distribution is $X_i^s = 1$. 
We summarize this as the following proposition. 

\begin{proposition}[\cite{Ros11}, Corollary 3.24]\label{sizebiascouplingrecipe}
Let $X_1,\ldots, X_n$ be zero-one random variables and let $p_i := P(X_i = 1)$. For each $i \in [n]$, let $(X_k^{(i)})$ have the distribution of $(X_k)_{k \neq i}$ conditional on $X_i = 1$. 
If $X = \sum_{i=1}^n X_i$, $\mu = EX$, and $I$ is chosen independent of all else with $P(I = i) = p_i/\mu$, then $X^s = \sum_{k \neq I} X_k^{(I)} + 1$ has the size-bias distribution of $X$. 
\end{proposition}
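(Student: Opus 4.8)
The plan is to verify the defining size-bias identity $E(Xf(X)) = \mu\,E(f(X^s))$ directly, by linearity and conditioning, which keeps the proof self-contained; alternatively one may simply observe that the claim is the special case of the recipe in Items (1)--(3) with $X_i^s \equiv 1$ and $\mu_i = p_i$. The one preliminary fact to record is that a zero-one random variable $X_i$ with $P(X_i = 1) = p_i$ has size-bias distribution equal to the point mass at $1$: for any $f$ with $E(X_i f(X_i)) < \infty$ we have $E(X_i f(X_i)) = p_i f(1) = p_i\, E(f(1))$, so $X_i^s \equiv 1$ is a valid size-biased version of $X_i$ and $\mu_i = E(X_i) = p_i$. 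This is the remark made in the text immediately before the statement.

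Next I would expand the left-hand side by linearity, $E(Xf(X)) = \sum_{i=1}^n E(X_i f(X))$, and note that for each $i$, since $X_i$ takes only the values $0$ and $1$, one has $E(X_i f(X)) = P(X_i = 1)\, E(f(X)\mid X_i = 1) = p_i\, E(f(X)\mid X_i = 1)$. The key step is to rewrite this conditional expectation: conditionally on $X_i = 1$ we have $X = 1 + \sum_{k \neq i} X_k$, and by the very definition of $(X_k^{(i)})_{k \neq i}$ as a copy of $(X_k)_{k \neq i}$ conditioned on $X_i = 1$, this quantity is equal in distribution to $1 + \sum_{k \neq i} X_k^{(i)}$. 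Hence $E(f(X)\mid X_i = 1) = E(f(1 + \sum_{k \neq i} X_k^{(i)}))$.

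Finally I would assemble the pieces. Combining the above, $E(Xf(X)) = \sum_{i=1}^n p_i\, E(f(1 + \sum_{k \neq i} X_k^{(i)})) = \mu \sum_{i=1}^n \frac{p_i}{\mu}\, E(f(1 + \sum_{k \neq i} X_k^{(i)}))$. Since the index $I$ is chosen independently of all else with $P(I = i) = p_i/\mu$, the sum on the right is exactly $E(f(1 + \sum_{k \neq I} X_k^{(I)})) = E(f(X^s))$, so $E(Xf(X)) = \mu\, E(f(X^s))$, which is the defining property of the size-bias distribution; thus $X^s$ has the size-bias distribution of $X$.

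The argument is essentially bookkeeping, so there is no serious obstacle here. The only point requiring care is the conditioning step, where one must use that $(X_k^{(i)})_{k \neq i}$ carries the \emph{conditional} law of $(X_k)_{k \neq i}$ given $X_i = 1$ (and not the unconditional law), together with the independence of the auxiliary index $I$ from all the coupled families $(X_k^{(i)})$, so that passing from $\sum_i \frac{p_i}{\mu}(\cdots)$ to an expectation over the random choice $I$ is legitimate.
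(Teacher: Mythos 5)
Your proof is correct and complete: the chain $E(Xf(X))=\sum_i E(X_i f(X))=\sum_i p_i E(f(X)\mid X_i=1)=\sum_i p_i E(f(1+\sum_{k\neq i}X_k^{(i)}))=\mu E(f(X^s))$ is exactly the standard verification, and you correctly flag the two points that need care (using the \emph{conditional} law for $(X_k^{(i)})_{k\neq i}$ and the independence of $I$). The paper itself gives no proof of this proposition, citing Ross's survey instead, and your argument is essentially the one found there and implicit in the recipe the paper sketches just beforehand, so there is nothing further to reconcile.
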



\section{Mean and Variance} \label{MeanVariance}

Fix a uniformly random permutation $\pi \in S_n$ and let $X := X(G_\pi)$ be the number of crossings in the corresponding randomly embedded graph $G_\pi$. 
Then we can write $X$ as a sum of indicator random variables 
\bal
X = \sum_{k \in M_2(G)} Y_k
\eal
where $M_2(G)$ is the set of 2-matchings of $G$ and $Y_k$ is the indicator random variable that the pair of edges indexed by $k$ forms a crossing in $G$. 

In this section we give a formula for the mean and variance of the random variable $X$ in terms of the number of subgraphs of certain types.

\begin{theorem}\label{crossingsmeanvariance}
For a graph $G$, let $X$ be the number of crossings in a random embedding on a set of $n$ points in convex position. Then
\bal
\mu = E(X) &= \frac{1}{3}m_2(G) \\
\sigma^2 = \Var(X) &= \frac{2}{3}m_4(G) + \frac{4}{5}m_3(G) + \frac{1}{3}m_2(G) \\
& \quad +\frac{4}{9}S_2 + \frac{7}{15}S_4 + \frac{1}{5}S_5 + \frac{1}{6}S_6 + \frac{1}{3}S_7 - \frac{1}{9}m_2(G)^2
\eal
where $m_r(G)$ is the number of r-matchings of $G$ and $S_i$ is the number of subgraphs of $G$ of type $C_i$, as defined in Figure \ref{secondmomentcrossingconfigs}.
\end{theorem}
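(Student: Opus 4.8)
The plan is to compute $\mu = E(X)$ and $\sigma^2 = \Var(X)$ directly from the representation $X = \sum_{k \in M_2(G)} Y_k$, where each $Y_k$ is the indicator that the $2$-matching $k = \{(a,b),(c,d)\}$ forms a crossing in the random embedding. For the mean, I would first observe that whether a fixed pair of vertex-disjoint edges crosses depends only on the relative cyclic order of its four endpoints under $\pi$; since $\pi$ is uniform, those four images are equally likely to be in any of the $4! = 24$ orders, equivalently any of the $3$ distinct ways of pairing up $4$ points on a circle into two chords is equally likely, and exactly one of these three pairings is a crossing. Hence $E(Y_k) = 1/3$ for every $k$, and $\mu = \frac{1}{3} m_2(G)$ by linearity.

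For the variance, I would expand $\sigma^2 = \sum_{k} \Var(Y_k) + \sum_{k \neq \ell} \cov(Y_k, Y_\ell)$ and then group the ordered pairs $(k,\ell)$ of distinct $2$-matchings according to the isomorphism type of the subgraph $H = k \cup \ell$ of $G$ that they span, together with which edges of $H$ belong to $k$ versus $\ell$. The key structural point is that $\cov(Y_k, Y_\ell)$ depends only on this combinatorial type: it is a function of how many vertices the two matchings share and how the eight (or fewer) edge-slots overlap, because $E(Y_k Y_\ell)$ is the probability, over the uniform relative order of the relevant endpoints, that both designated pairings are crossings. When $k$ and $\ell$ are vertex-disjoint (so $H$ has $8$ vertices and is a $4$-matching) the indicators are independent and the covariance vanishes — this explains why the leading term involves $m_4(G)$ only through $\Var$ contributions, not covariances. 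The remaining cases, where $k$ and $\ell$ share one, two, or three edges or some vertices, produce the $m_3(G)$, $m_2(G)$, and $S_i$ terms: for each admissible configuration type $C_i$ one computes the associated probability by a finite enumeration over the at most $7$ points involved, subtracts $(1/3)^2$ for the covariance, and multiplies by the number $S_i$ of labelled copies of that configuration in $G$ (with the appropriate combinatorial multiplicity for the ordered pair $(k,\ell)$). I would also need $\Var(Y_k) = \frac{1}{3} - \frac{1}{9} = \frac{2}{9}$, contributing $\frac{2}{9} m_2(G)$, and the $-\frac{1}{9} m_2(G)^2$ term arises as $-(E X)^2$-type bookkeeping once all $\cov$ terms are written as $E(Y_k Y_\ell) - \frac19$ summed over \emph{all} ordered pairs including $k = \ell$; reconciling the two ways of organizing the sum is where the constant $\frac13 m_2(G)$ (rather than $\frac29 m_2(G)$) in the stated formula comes from, so I must be careful about exactly which pairs are included in which sum.

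The main obstacle is the case analysis for the covariance: one must correctly enumerate all isomorphism types of the union of two overlapping $2$-matchings that can occur as subgraphs of $G$ — the configurations $C_2, C_4, C_5, C_6, C_7$ of Figure \ref{secondmomentcrossingconfigs} together with the ``$m_3$'' case (two $2$-matchings sharing exactly one edge, spanning a $3$-matching) — and for each one compute the exact crossing-pair probability by integrating over the uniform order of the points on the circle, being careful that some configurations admit several non-isomorphic placements of the two chord-pairs and hence contribute with different coefficients. Matching each computed probability minus $\frac19$ against the stated rational coefficients ($\frac49, \frac{7}{15}, \frac15, \frac16, \frac13$, and $\frac25 = \frac{4}{5}\cdot\frac12$ etc.) is a bookkeeping-heavy but elementary verification. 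Once the per-configuration probabilities are in hand, the formula follows by collecting terms; I would present the probability computations in a table keyed to Figure \ref{secondmomentcrossingconfigs} and relegate the more tedious enumerations to a lemma or appendix.
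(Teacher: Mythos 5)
Your proposal follows essentially the same route as the paper: write $X=\sum_{k\in M_2(G)}Y_k$, get $E(Y_k)=1/3$ from the uniform cyclic order of the four endpoints, and then classify the pairs $(k,\ell)$ by the overlap type of $k\cup\ell$ (the configurations of Figure \ref{secondmomentcrossingconfigs}) and compute the joint crossing probability for each type by finite enumeration, exactly as the paper does for $E(X^2)$ before subtracting $(EX)^2=\tfrac19 m_2(G)^2$. Your choice to organize the computation through covariances rather than raw second moments is only a bookkeeping rearrangement (and, as you note, requires reconciling which pairs are counted where, e.g.\ the disjoint-pair covariances vanish so the $m_4$ term then cancels against part of $\tfrac19 m_2(G)^2$), but it leads to the same case analysis and the same result.
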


\begin{proof}
For all $k\in M_2(G)$, observe that $Y_k \sim \Ber(1/3)$. Indeed, if $k$ consists of the two edges $e_1 = (v_1,v_2)$  and $e_2 = (v_3,v_4)$, 
then the probability of a crossing depends only on the cyclic order in which $v_1,v_2,v_3,v_4$ are embedded in $\{1,\dots,n\}$, not on the precise positions of them. 
From the six possible cyclic orders, only $1/3$ yield a crossing; see Figure \ref{six_orders}. 

Therefore $E(Y_k) = 1/3$, so by linearity of expectation we get
\bal
E(X) &= \sum_{k \in M_2(G)} E(Y_k) = \frac{1}{3} m_2(G).
\eal
For the second moment, we expand the sum to get
\bal
E(X^2) = \sum_{k,\ell \in M_2(G)} E(Y_k Y_\ell) = \sum_{k,\ell \in M_2(G)} P(k,\ell \text{ are crossings in } G).
\eal
Note that the probability $P(k,\ell \text{ are crossings in } G)$ depends on the number of edges and vertices that the two $2$-matchings, $k$ and $\ell$, share. 
Thus the sum above can be divided into eight different types; see Figure \ref{secondmomentcrossingconfigs}. 

Let $A_i$ be the event that $k$ and $\ell$ are both crossings in $G$ of type $C_i$. Then
\bal
\begin{tabular}{c c c c}
$P(A_1) = 1/9$, & $P(A_2)  = 1/9$ , & $P(A_3) = 2/15$, & $P(A_4) = 7/60$, \\
$P(A_5) = 1/10$ , & $P(A_6) = 1/12$, & $P(A_7) = 1/6$, & $P(A_8) = 1/3$.
\end{tabular}
\eal 
Summing over all types yields
\bal
E(X^2) = \frac{2}{3}m_4(G)+\frac{4}{5}m_3(G)+\frac{1}{3}m_2(G)+\frac{4}{9}S_2+ \frac{7}{15}S_4+\frac{1}{5}S_5+\frac{1}{6}S_6+ \frac{1}{3}S_7    
\eal
where $S_i$ is the number of subgraphs of $G$ of type $C_i$. The expression for the variance follows. 
\end{proof}

\begin{figure}
\centering 
\begin{tikzpicture}[scale = 0.75]

\draw (0,0) node[black][left] {$v_1$} -- (1,0) ;
\filldraw (0,0) circle (1.5pt);
\filldraw (1,0) node[black][right]{$v_2$} circle (1.5pt);
\draw (0,-1) node[black][left] {$v_4$} -- (1,-1) ;
\filldraw (0,-1) circle (1.5pt);
\filldraw (1,-1) node[black][right]{$v_3$} circle (1.5pt);

\draw (3,0) node[black][left] {$v_1$} -- (4,0) ;
\filldraw (3,0) circle (1.5pt);
\filldraw (4,0) node[black][right]{$v_2$} circle (1.5pt);
\draw (3,-1) node[black][left] {$v_3$} -- (4,-1) ;
\filldraw (3,-1) circle (1.5pt);
\filldraw (4,-1) node[black][right]{$v_4$} circle (1.5pt); 

\draw (6,0) node[black][left] {$v_1$} -- (7,-1) ;
\filldraw (6,0) circle (1.5pt);
\filldraw (7,-1) node[black][right]{$v_2$} circle (1.5pt);
\draw (6,-1) node[black][left] {$v_4$} -- (7,0) ;
\filldraw (6,-1) circle (1.5pt);
\filldraw (7,0) node[black][right]{$v_3$} circle (1.5pt);

\draw (9,0) node[black][left] {$v_1$} -- (10,-1) ;
\filldraw (9,0) circle (1.5pt);
\filldraw (10,-1) node[black][right]{$v_2$} circle (1.5pt);
\draw (9,-1) node[black][left] {$v_3$} -- (10,0) ;
\filldraw (9,-1) circle (1.5pt);
\filldraw (10,0) node[black][right]{$v_4$} circle (1.5pt);

\draw (12,0) node[black][left] {$v_1$} -- (12,-1) ;
\filldraw (12,0) circle (1.5pt);
\filldraw (12,-1) node[black][left]{$v_2$} circle (1.5pt);
\draw (13,0) node[black][right] {$v_3$} -- (13,-1) ;
\filldraw (13,0) circle (1.5pt);
\filldraw (13,-1) node[black][right]{$v_4$} circle (1.5pt);

\draw (15,0) node[black][left] {$v_1$} -- (15,-1) ;
\filldraw (15,0) circle (1.5pt);
\filldraw (15,-1) node[black][left]{$v_2$} circle (1.5pt);
\draw (16,0) node[black][right] {$v_4$} -- (16,-1) ;
\filldraw (16,0) circle (1.5pt);
\filldraw (16,-1) node[black][right]{$v_3$} circle (1.5pt);

\end{tikzpicture}
\caption{The six possible cyclic orders for a $2$-matching.}
\label{six_orders}
\end{figure}
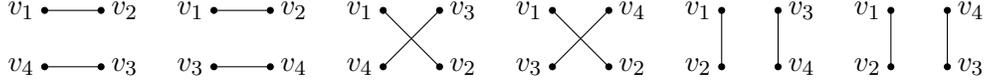

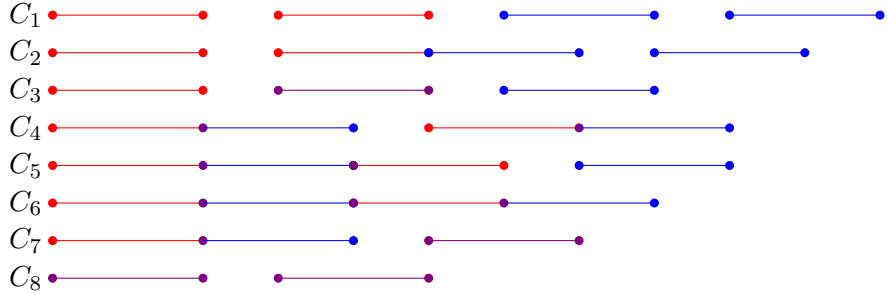
\begin{figure}
\centering 
\begin{tikzpicture}[scale = 1]
\draw [red] (0,0) node[black][left] {$C_1$} -- (2,0) ;
\filldraw [red] (0,0) circle (1.5pt);
\filldraw [red] (2,0) circle (1.5pt);
\draw [red] (3,0) -- (5,0);
\filldraw [red] (3,0) circle (1.5pt);
\filldraw [red] (5,0) circle (1.5pt);
\draw [blue](6,0) -- (8,0);
\filldraw [blue] (6,0) circle (1.5pt);
\filldraw [blue] (8,0) circle (1.5pt);
\draw [blue](9,0) -- (11,0);
\filldraw [blue] (9,0) circle (1.5pt);
\filldraw [blue] (11,0) circle (1.5pt);

\draw [red] (0,-0.5) node[black][left] {$C_2$} -- (2,-0.5);
\filldraw [red] (0,-0.5) circle (1.5pt);
\filldraw [red] (2,-0.5) circle (1.5pt);
\draw [red] (3,-0.5) -- (5,-0.5);
\filldraw [red] (3,-0.5) circle (1.5pt);
\filldraw [red] (5,-0.5) circle (1.5pt);
\draw [blue](5,-0.5) -- (7,-0.5);
\filldraw [blue] (5,-0.5) circle (1.5pt);
\filldraw [blue] (7,-0.5) circle (1.5pt);
\draw [blue](8,-0.5) -- (10,-0.5);
\filldraw [blue] (8,-0.5) circle (1.5pt);
\filldraw [blue] (10,-0.5) circle (1.5pt);

\draw [red] (0,-1) node[black][left] {$C_3$} -- (2,-1);
\filldraw [red] (0,-1) circle (1.5pt);
\filldraw [red] (2,-1) circle (1.5pt);
\draw [violet] (3,-1) -- (5,-1);
\filldraw [violet] (3,-1) circle (1.5pt);
\filldraw [violet] (5,-1) circle (1.5pt);
\draw [blue](6,-1) -- (8,-1);
\filldraw [blue] (6,-1) circle (1.5pt);
\filldraw [blue] (8,-1) circle (1.5pt);

\draw [red] (0,-1.5) node[black][left] {$C_4$} -- (2,-1.5);
\filldraw [red] (0,-1.5) circle (1.5pt);
\filldraw [violet] (2,-1.5) circle (1.5pt);
\draw [blue](2,-1.5) -- (4,-1.5);
\filldraw [blue] (4,-1.5) circle (1.5pt);
\draw [red] (5,-1.5) -- (7,-1.5);
\filldraw [red] (5,-1.5) circle (1.5pt);
\filldraw [violet] (7,-1.5) circle (1.5pt);
\draw [blue](7,-1.5) -- (9,-1.5);
\filldraw [blue] (9,-1.5) circle (1.5pt);

\draw [red] (0,-2) node[black][left] {$C_5$}-- (2,-2);
\filldraw [red] (0,-2) circle (1.5pt);
\filldraw [violet] (2,-2) circle (1.5pt);
\draw [blue](2,-2) -- (4,-2);
\filldraw [blue] (4,-2) circle (1.5pt);
\draw [red] (4,-2) -- (6,-2);
\filldraw [red] (6,-2) circle (1.5pt);
\filldraw [violet] (4,-2) circle (1.5pt);
\draw [blue](7,-2) -- (9,-2);
\filldraw [blue] (9,-2) circle (1.5pt);
\filldraw [blue] (7,-2) circle (1.5pt);

\draw [red] (0,-2.5) node[black][left] {$C_6$}-- (2,-2.5);
\filldraw [red] (0,-2.5) circle (1.5pt);
\filldraw [violet] (2,-2.5) circle (1.5pt);
\draw [blue](2,-2.5) -- (4,-2.5);
\filldraw [blue] (4,-2.5) circle (1.5pt);
\draw [red] (4,-2.5) -- (6,-2.5);
\filldraw [violet] (6,-2.5) circle (1.5pt);
\filldraw [violet] (4,-2.5) circle (1.5pt);
\draw [blue](6,-2.5) -- (8,-2.5);
\filldraw [blue] (8,-2.5) circle (1.5pt);

\draw [red] (0,-3) node[black][left] {$C_7$} -- (2,-3);
\filldraw [red] (0,-3) circle (1.5pt);
\filldraw [violet] (2,-3) circle (1.5pt);
\draw [blue](2,-3) -- (4,-3);
\filldraw [blue] (4,-3) circle (1.5pt);
\draw [violet] (5,-3) -- (7,-3);
\filldraw [violet] (5,-3) circle (1.5pt);
\filldraw [violet] (7,-3) circle (1.5pt);

\draw [violet] (0,-3.5) node[black][left] {$C_8$} -- (2,-3.5);
\filldraw [violet] (0,-3.5) circle (1.5pt);
\filldraw [violet] (2,-3.5) circle (1.5pt);
\draw [violet] (3,-3.5) -- (5,-3.5);
\filldraw [violet] (3,-3.5) circle (1.5pt);
\filldraw [violet] (5,-3.5) circle (1.5pt);

\end{tikzpicture}
\caption{The eight types of $2$-matchings which appear in the sum of the second moment of $X$.}
\label{secondmomentcrossingconfigs}
\end{figure}


\section{Central Limit Theorem for Crossings} \label{CLTCrossings}

In this section, we construct a size-bias coupling for the number of crossings using the recipe outlined in Section \ref{sizebiascouplingrecipe}. 
We then apply the size-bias coupling version of Stein’s method, Theorem \ref{sizebiasStein}, to prove Theorem \ref{maintheorem}. 
The main difficulty is in bounding the variance term of Theorem \ref{sizebiasStein}.

\subsection{Construction of Size-Bias Coupling} \label{sizebiascouplingconstruction}

Following the coupling recipe from Section \ref{sizebiascouplingrecipe}, we construct a random variable $X^s$ having the size-bias distribution with respect to the number of crossings $X$.

Fix a permutation $\pi \in S_n$ and let $G_\pi$ be the corresponding embedded graph. Let $X$ be the number of crossings in $G_\pi$. 
Pick an index $I = (e,f) \in M_2(G)$ uniformly at random from the set of $2$-matchings of $G$, independent of all else. In particular, this is independent of $\pi$. 
Suppose the edges $e,f$ are given by $e = (u_1, u_2)$ and $f = (v_1, v_2)$. 
Under the embedding defined by $\pi$, the corresponding edges $e', f'$ in $G_\pi$ are given by $e' = (\pi(u_1), \pi(u_2))$ and $f' = (\pi(v_1), \pi(v_2))$. 

We construct a new permutation $\pi^s$ from $\pi$ as follows. 
If there is already a crossing in $G_\pi$ at index I, set $\pi^s = \pi$. 
Otherwise, edges $e', f'$ do not cross in $G_\pi$. Without loss of generality, we may assume that $\pi(u_1) < \pi(v_1) < \pi(v_2) < \pi(u_2)$. 
Choose a vertex among $u_1, u_2, v_1, v_2$ uniformly at random. 
If vertex $u_1$ or $v_1$ is chosen, define $\pi^s$ such that $\pi^s(u_2) = \pi(v_2), \pi^s(v_2) = \pi(u_2)$, and $\pi^s = \pi$ otherwise.  
If vertex $u_2$ or $v_2$ is chosen, set $\pi^s(u_1) = \pi(v_1), \pi^s(v_1) = \pi(u_1)$, and $\pi^s = \pi$ otherwise. 
Finally let $G_{\pi^s}$ be the corresponding embedded graph and note that the edges $e'' = (\pi^s(u_1), \pi^s(u_2))$ and $f'' = (\pi^s(v_1), \pi^s(v_2))$ form a crossing in $G_{\pi^s}$. 
That is, $G_{\pi^s}$ has a crossing at index $I$. 

Set $X^s := \sum_{k \neq I} Y_k^{(I)} + 1$ where $Y_k^{(I)}$ is the indicator random variable that there is a crossing in $G_{\pi^s}$ at index $k$. 
Observe that by construction, $\pi^s$ is a uniformly random permutation conditioned on the event that $\pi(u_1), \pi(u_2), \pi (v_1), \pi(v_2)$ are in alternating cyclic order. 
It follows that $G_{\pi^s}$ is a uniformly random embedding of $G$ conditioned to have a crossing at index $I$. 
Thus $(Y^{(I)})$ has the marginal distribution of $(Y_k^{(I)})_{k \neq I}$ conditional on $Y_I = 1$. 
By Proposition \ref{sizebiascouplingrecipe}, $X^s$ has the size-bias distribution of $X$. We record this as the following proposition.

\begin{proposition} \label{couplingconstruction}
Let $X$ be the number of crossings in a randomly embedded graph $G_\pi$. Let $X^s$ be constructed as above. Then $X^s$ has the size-bias distribution of $X$.
\end{proposition}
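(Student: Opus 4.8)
The plan is to verify that the pair $(\pi,\pi^s)$ produced in Section~\ref{sizebiascouplingconstruction} is an instance of the recipe in Proposition~\ref{sizebiascouplingrecipe}, applied to the decomposition $X=\sum_{k\in M_2(G)}Y_k$ into crossing indicators. First I would observe that, by Theorem~\ref{crossingsmeanvariance}, every indicator has the same success probability $P(Y_k=1)=1/3$ and $\mu=E(X)=\tfrac13 m_2(G)=\tfrac13|M_2(G)|$, so the sampling weights $P(Y_k=1)/\mu$ demanded by Proposition~\ref{sizebiascouplingrecipe} are uniform on $M_2(G)$; this is exactly how the index $I$ is chosen in the construction, and it is chosen independently of $\pi$. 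It then remains to check the other hypothesis of Proposition~\ref{sizebiascouplingrecipe}: for each fixed $k\in M_2(G)$, the vector of crossing indicators of $G_{\pi^s}$ at the $2$-matchings $\ell\neq k$, conditional on $\{I=k\}$, must have the law of the corresponding indicators of $G_\pi$ conditioned on $\{Y_k=1\}$. Since each indicator is a deterministic function of the embedding, it suffices to show that, conditional on $\{I=k\}$, the permutation $\pi^s$ is uniformly distributed on $\mathcal{A}_k:=\{\sigma\in S_n:\text{the two edges of }k\text{ cross under }\sigma\}$.

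To establish this, I would fix $k=(e,f)$, condition on the cyclic order in which the four endpoints of $e$ and $f$ are placed among the $n$ points (together with the rest of the embedding), and use that whether a given pair of edges crosses depends only on this cyclic-order data; this reduces the claim to a finite statement about the $4!=24$ ways of arranging the four endpoints. Of these, $8$ are crossing arrangements --- which the construction leaves untouched --- and $16$ are not. The symmetries of the $2$-matching $k$, namely the order-reversal $\pi\mapsto(n+1-\pi(\cdot))$ (which preserves all crossings) together with transposing the two endpoints of an edge and transposing the two edges, reduce the non-crossing arrangements to the representative $\pi(u_1)<\pi(v_1)<\pi(v_2)<\pi(u_2)$ treated in the construction, with the prescribed transpositions relabeled accordingly in the remaining cases. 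From the representative one checks directly that the two prescribed transpositions send it, each with probability $1/2$, to two \emph{distinct} crossing arrangements. The last point is then to confirm that the total mass arriving at each of the $8$ crossing arrangements from the non-crossing ones is the same: since each of the $16$ non-crossing arrangements transfers one unit of mass (split as $\tfrac12+\tfrac12$), this common contribution must be $16/8=2$, and adding the one unit each crossing arrangement keeps from itself gives $3=24/8$ at every crossing arrangement, i.e.\ the uniform distribution on $\mathcal{A}_k$. Since $\pi^s$ manifestly lies in $\mathcal{A}_k$ by construction, this yields $\pi^s\sim\mathrm{Unif}(\mathcal{A}_k)$, and Proposition~\ref{sizebiascouplingrecipe} then gives the claim.

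The main obstacle is this last bookkeeping step. One must make sure that the two transpositions really do land in $\mathcal{A}_k$ for every non-crossing arrangement (after the symmetry reduction, or by a direct case analysis over the non-crossing configuration types) and, more delicately, that the incoming mass is distributed \emph{evenly} over all eight crossing arrangements rather than merely summing to the correct total; the cleanest way to obtain the latter is to observe that the construction is equivariant under the symmetry group above, which acts transitively on the eight crossing arrangements. Everything upstream of the finite reduction is a direct application of Proposition~\ref{sizebiascouplingrecipe}.
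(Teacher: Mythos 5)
Your proof is correct and follows essentially the same route as the paper: both reduce the claim to Proposition~\ref{sizebiascouplingrecipe} by showing that, conditional on $I=k$, the permutation $\pi^s$ is uniform on the set of embeddings in which $k$ is a crossing. The only difference is that the paper simply asserts this uniformity ``by construction,'' whereas you actually verify it via the finite computation on the $24$ arrangements of the four endpoints (with the equivariance/transitivity argument handling the even distribution of mass over the $8$ crossing arrangements), so your write-up supplies a detail the paper leaves implicit.
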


\subsection{Bounding the Variance Term}

In order to apply Theorem \ref{sizebiasStein}, we need an upper bound on the variance term $\Var(E[X^s - X \mid X])$.  
We compute an upper bound in the following lemma, which is also one of the main results of this paper. 

\begin{lemma}\label{lemmaVariance}
Let $G$ be a graph on $n$ vertices and let $X$ be the number of crossings in a uniformly random embedding of $G$. Then 
\bal
\Var(E[X^s- X \mid X]) &\leq 4\Delta^2 m^2 \left( 1 + \left(\frac{32 \Delta}{m} - 1  \right)\frac{6m_4(G)}{m_2(G)^2} + \frac{\Delta^2m}{2m_2(G)} \right ),
\eal
where $m$ is the number of edges of $G$, $m_r(G)$ is the number of $r$-matchings of $G$, and $\Delta$ is the maximum degree of $G$. 
\end{lemma}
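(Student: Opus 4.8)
The plan is to start from the conditional difference $D := E[X^s - X \mid X]$ and first obtain a more tractable pointwise expression. By the construction of Section~\ref{sizebiascouplingconstruction}, conditionally on $\pi$ we have $X^s - X = \sum_{k \neq I}(Y_k^{(I)} - Y_k) + (1 - Y_I)$, where $I=(e,f)$ is the uniformly chosen $2$-matching and $\pi^s$ differs from $\pi$ only by one transposition of embedded positions among the four vertices of $e$ and $f$ (and only when $Y_I=0$). Averaging over $I$ and over the choice of the swap vertex, and then conditioning on $X$, the first step is to bound $|D|$ by something of the form $C \Delta m / m_2(G) \cdot (\text{bounded count})$. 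Concretely, a transposition of two positions in $\pi$ can only change the crossing-indicator $Y_k$ for those $2$-matchings $k$ that use at least one of the four vertices $u_1,u_2,v_1,v_2$; since each vertex lies in at most $\Delta$ edges and each such edge pairs with at most $m$ others, the number of affected indices $k$ is $O(\Delta m)$. Hence $|X^s - X| \le A$ for $A = O(\Delta m)$ (this also supplies the $A$ needed for the first term of Theorem~\ref{sizebiasStein} in the main theorem), and more importantly $E[X^s - X \mid \pi]$ is a bounded function whose fluctuations we must control.

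The second step is the variance bound itself. I would use the standard device for size-bias couplings built from indicator sums: write $\Var(E[X^s - X \mid X]) \le \Var(X^s - X)$ is too lossy, so instead expand $E[X^s-X\mid X] = E\!\big[E[X^s - X \mid \pi] \,\big|\, X\big]$ and use $\Var(E[\,\cdot\mid X]) \le \Var(E[\,\cdot \mid \pi])$. Thus it suffices to bound $\Var\big(E[X^s - X\mid \pi]\big)$, a variance of an explicit function of the random embedding. Writing $E[X^s - X \mid \pi] = \frac{1}{m_2(G)}\sum_{(e,f)\in M_2(G)} g_{e,f}(\pi)$ where $g_{e,f}(\pi)$ is the (conditional on $\pi$) expected change contributed by choosing $I=(e,f)$, I would expand the variance as a double sum over pairs of $2$-matchings $(e,f)$ and $(e',f')$ and observe that $\cov(g_{e,f}, g_{e',f'})$ vanishes, or is suitably small, unless the two $2$-matchings share vertices — this is the dependency-graph structure. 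Each $g_{e,f}(\pi)$ is itself controlled by $O(\Delta m)$ indicator terms (the indices $k$ whose crossing status can flip), so $|g_{e,f}| = O(\Delta m)$, and the number of pairs $((e,f),(e',f'))$ sharing a vertex is $O(m_2(G)\cdot \Delta m)$ in general, with a refinement to $O(m_4(G))$ when one tracks the disjoint contributions. Collecting the three regimes — pairs sharing many vertices, pairs sharing one vertex, and the diagonal — should produce exactly the three terms $1$, $(32\Delta/m - 1)\cdot 6 m_4(G)/m_2(G)^2$, and $\Delta^2 m /(2 m_2(G))$ inside the parentheses, after multiplying through by the prefactor $1/m_2(G)^2$ and the $(\Delta m)^2$ from the size of each $g_{e,f}$; the leading $4\Delta^2 m^2$ comes from $4 \cdot (\Delta m)^2$ up to the combinatorial bookkeeping.

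The main obstacle will be the careful accounting in this double-sum expansion: one has to (i) correctly identify, for a fixed pair $(e,f)$, which $2$-matchings $k$ have $Y_k^{(I)} \ne Y_k$ and bound the size of $g_{e,f}$ sharply enough (a crude $O(\Delta m)$ bound is what feeds the constant $32$ and the coefficient $6$), and (ii) classify the covariance terms by the intersection pattern of the two chosen $2$-matchings and match the count of each pattern to the matching-numbers $m_2(G), m_4(G)$ and the degree bound $\Delta$. The appearance of $m_4(G)$ specifically signals that the dominant off-diagonal contribution comes from configurations where the union of the four edges involved forms a $4$-matching; the $\Delta/m$ correction accounts for the vertex-sharing configurations, and the $\Delta^2 m / m_2(G)$ term is the diagonal. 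Keeping the constants honest through this bookkeeping — rather than the structure of the argument — is where the real work lies, and I would organize it by first proving the pointwise bound $|g_{e,f}(\pi)| \le 2\Delta m$ (or the relevant constant), then splitting $\sum_{(e,f),(e',f')} \cov(g_{e,f},g_{e',f'})$ along the intersection type, and finally assembling the pieces.
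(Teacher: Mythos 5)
Your overall architecture matches the paper's: condition on the chosen index $I$ to write $E[X^s-X\mid X]=\frac{1}{m_2(G)}\sum_{i\in M_2(G)}(X^{(i)}-X)$, expand the variance as $\frac{1}{m_2(G)^2}\sum_{i,j}\cov(X^{(i)}-X,X^{(j)}-X)$, use the pointwise bound $|X^{(i)}-X|\le 2\Delta(m-1)$, and classify pairs $(i,j)$ by intersection pattern. (Your extra reduction $\Var(E[\,\cdot\mid X])\le\Var(E[\,\cdot\mid \pi])$ is valid and harmless.)

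There is, however, a genuine gap at the heart of the argument: the pairs $i,j$ that are vertex-disjoint. These account for $6m_4(G)$ of the $m_2(G)^2$ pairs, i.e.\ almost all of them in the sparse examples of interest, so the crude bound $4\Delta^2(m-1)^2$ cannot be used there --- it would only reproduce the ``$1$'' inside the parentheses and the lemma would say nothing beyond $\Var\le 4\Delta^2m^2$, which is too weak for the applications. Your proposal disposes of these pairs by asserting that the covariance ``vanishes, or is suitably small, unless the two $2$-matchings share vertices,'' invoking a dependency-graph structure. That assertion is false here, because $X^{(i)}-X$ is not a local function of a neighborhood of $i$: it equals $\sum_{k}(\I_{A_k^{(i)}}-\I_{B_k^{(i)}})$ summed over all $2$-matchings $k$ having an edge incident to $V(i)$, and the \emph{second} edge of such a $k$ can lie anywhere in $G$, in particular it can meet $V(j)$. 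These bridging terms make the covariance nonzero even when $i$ and $j$ are disjoint and non-adjacent. The paper's Subcase 2.2 supplies exactly the missing mechanism: expand the covariance as a double sum over $(k,\ell)$, use the independence of the patterns induced by a uniform permutation on disjoint vertex sets to annihilate every pair except those with $V(k)\cap V(i)\ne\emptyset$, $V(\ell)\cap V(j)\ne\emptyset$, and ($V(k)\cap V(j)\ne\emptyset$ or $V(\ell)\cap V(i)\ne\emptyset$), and count those pairs (at most $128\Delta^3 m$), which is where the $\tfrac{32\Delta}{m}\cdot\tfrac{6m_4(G)}{m_2(G)^2}$ term originates. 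Your proposal gestures at a ``refinement to $O(m_4(G))$'' but does not contain this step, and your attribution of the final terms is also off: the ``$1$'' comes from the $m_2(G)^2-6m_4(G)$ vertex-sharing pairs, the $\Delta^2m/(2m_2(G))$ term from vertex-disjoint pairs joined by an edge, and the $32\Delta/m$ correction from the fully disjoint pairs; there is no separate diagonal term. This misattribution is consistent with the case analysis not actually having been carried out.
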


\begin{proof}
First we write the expectation as
\bal
E(X^s - X \mid X) &=  \sum_{i \in M_2(G) } E(X^s - X \mid X, I = i)P(I = i) \\
&= \frac{1}{m_2(G)} \sum_{i \in M_2(G)} \left( X^{(i)} - X \right),
\eal
where $X^{(i)}$ denotes $X^s$ conditioned to have a crossing at index $I = i$.
Thus we can write the variance as
\bal
\Var(E[X^s - X \mid X]) = \frac{1}{m_2(G)^2} \sum_{i,j \in M_2(G)} \cov(X^{(i)} - X, X^{(j)} - X). 
\eal

Let $V(i)$ denote the four vertices of the 2-matching $i$. We split the sum above into two cases according to whether or not the vertices $V(i), V(j)$ intersect. 

{\bf Case 1:} Suppose $V(i) \cap V(j) \neq \emptyset$. The number of covariance terms for this case is
\bal
|\{i,j \in M_2(G) : V(i) \cap V(j) \neq \emptyset \}| = m_2(G)^2-\binom{4}{2} m_4(G) = m_2(G)^2- 6 m_4(G).
\eal
For these terms, it is enough to use the bound
\bal
\cov(X^{(i)} - X, X^{(j)} - X) \leq E[(X^{(i)} - X)^2] \leq 4 \Delta^2 (m-1)^2,
\eal
where we used the upper bound $|X^{(i)} - X| \leq 2 \Delta(m-1)$, since by construction at most $m-1$ crossings are created for every edge which is incident to a vertex of $i$. 
Thus the contribution to the sum from these terms is upper bounded by
\bal
\frac{4 \Delta^2 (m-1)^2 \left(m_2(G)^2- 6m_4(G) \right)}{m_2(G)^2}.
\eal

{\bf Case 2:} Otherwise suppose $V(i) \cap V(j) = \emptyset$.
The number of covariance terms for this case is
\bal
|\{i,j \in M_2(G) : V(i) \cap V(j) = \emptyset \}| = \binom{4}{2} m_4(G) = 6 m_4(G).
\eal
Let $N(i)$ be the set of edges incident to the four vertices of the 2-matching $i$. We can further split this case into two subcases.

\textbf{Subcase 2.1} Suppose $V(i) \cap V(j) = \emptyset$ and $N(i) \cap N(j) \neq \emptyset$. Then the number of covariance terms for this subcase is upper bounded by
\bal
|\{i,j \in M_2(G) : V(i) \cap V(j) = \emptyset, N(i) \cap N(j) \neq \emptyset \}| \leq \frac{m_2(G)(\Delta-1)^2(m-4)}{2}.
\eal
To see this, observe that there must exist at least one edge $(v_1, v_2)$ such that $v_1 \in V(i)$ and $v_2 \in V(j)$. 
To count the number of such configurations, first choose a $2$-matching, $i$, and consider one of its four vertices $v \in V(i)$. Let $w$ be a neighbor of $v$ such that $w \in V(j)$. 
There are at most $\Delta - 1$ choices for $w$. Next, find a neighbor of $w$, which is not $v$, to define one of the edges of $j$. There are at most $\Delta - 1$ choices for this neighbor. 
Finally we find another edge, which cannot be in $i$ nor contain $w$, to define the other edge of $j$. There are at most $m-4$ choices. 
See Figure \ref{subcasefig} for a diagram which explains this counting. Combining the above and taking into account double counting yields the desired upper bound. 

Again for these terms, it is enough to use the bound $\cov(X^{(i)} - X, X^{(j)} - X) \leq 4 \Delta^2 (m-1)^2$. Thus the contribution to the sum from these terms is upper bounded by
\bal
\frac{2\Delta^2(m-1)^2 \left(m_2(G)(\Delta-1)^2(m-4) \right)}{m_2(G)^2}.
\eal
    
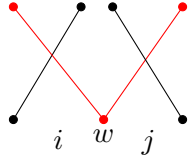
\begin{figure}
\centering 
\begin{tikzpicture}[scale = 1.5]
\draw (0,0)  -- (0.6,1) ;
\filldraw (0,0) circle (1 pt);
\filldraw (0.6,1) circle (1 pt);
\draw [red] (0,1) -- (0.8,0) ;
\filldraw [red] (0,1) circle (1 pt);
\filldraw [red] (1.5,1) circle (1 pt);
\filldraw [red] (0.8,0) node[black][below]{$w$} circle (1 pt);
\draw [red] (0.8,0) -- (1.5,1);
\draw (0.88,1)  -- (1.5,0) ;
\filldraw (0.88,1) circle (1 pt);
\filldraw (1.5,0) circle (1 pt); 
\filldraw (0.4,0) node[black][below]{$i$} circle (0 pt);
\filldraw (1.2,0) node[black][below]{$j$} circle (0 pt);
\end{tikzpicture}
\caption{A generic pair of $2$-matchings sharing a vertex.} 
\label{subcasefig}
\end{figure}

{\bf Subcase 2.2:} Suppose $V(i) \cap V(j) = \emptyset$ and $N(i) \cap N(j) = \emptyset$. For $i, k \in M_2(G)$, define the events 
\bal
A_k^{(i)} := A_k^{(i)}(G_\pi) = \{\text{$k$ is a crossing in $G_{\pi^{(i)}}$ and $k$ is not a crossing in $G_\pi$}\}, \\
B_k^{(i)} := B_k^{(i)}(G_\pi) = \{\text{$k$ is not a crossing in $G_{\pi^{(i)}}$ and $k$ is a crossing in $G_\pi$}\},
\eal
where $G_{\pi^{(i)}}$ is $G_{\pi^s}$ conditioned to have a crossing at index $I = i$, as defined in the size-bias coupling construction. Observe that 
\bal
X^{(i)} - X = \sum_{k\in M_2(G)} \I_{A_k^{(i)}} - \I_{B_k^{(i)}}.
\eal
Then for fixed $i,j \in M_2(G)$, we have that
\bal
\cov(X^{(i)} - X, X^{(j)} - X) = \sum_{k, \ell \in  M_2(G)} \cov\left( \I_{A_k^{(i)}} - \I_{B_k^{(i)}}, \I_{A_\ell^{(j)}} - \I_{B_\ell^{(j)}}\right). 
\eal

For this subcase, our strategy is to bound the number of pairs $(k,\ell)$ which have nonzero contribution to the covariance term, $\cov\left( \I_{A_k^{(i)}} - \I_{B_k^{(i)}}, \I_{A_\ell^{(j)}} - \I_{B_\ell^{(j)}}\right)$.

First observe that if $k$ and $i$ have no vertices in common, then $\I_{B_k^{(i)}}=0$ and $\I_{A_k^{(i)}}=0$ since $k$ remains equal in both $G_\pi$ and $G_{\pi^{(i)}}$. 
Thus $\I_{B_k^{(i)}}-\I_{A_k^{(i)}}=0$. Similarly if $\ell$ and $j$ have no vertices in common, then $\I_{A_\ell^{(j)}}-\I_{B_\ell^{(j)}}=0$. 
In either case, $\cov\left( \I_{A_k^{(i)}} - \I_{B_k^{(i)}}, \I_{A_\ell^{(j)}} - \I_{B_\ell^{(j)}}\right)=0$.

On the other hand, if $k$ and $j$ and have no vertices in common, and moreover $\ell$ and $i$ have no vertices in common, 
then $\cov(\I_{A_k^{(i)}} - \I_{B_k^{(i)}}, \I_{A_\ell^{(j)}} - \I_{B_\ell^{(j)}}) = 0$ 
since the random variables $\I_{A_k^{(i)}} - \I_{B_k^{(i)}}$ and $\I_{A_\ell^{(j)}} - \I_{B_\ell^{(j)}}$ have no interaction and so must be independent.

It follows that the $(k, \ell)$ which have nonzero contribution to the covariance terms satisfy the following properties:
\begin{enumerate}
\item $V(k)\cap V(i)\neq \emptyset$.
\item $V(\ell)\cap V(j)\neq \emptyset$.
\item $V(k)\cap V(j)\neq \emptyset$ or $V(\ell)\cap V(i)\neq \emptyset$.
\end{enumerate}

We claim that the number of pairs $(k,\ell)$ satisfying (1), (2) and (3) is at most $128\Delta^3m$. 
We consider the case where $V(k)\cap V(j)\neq \emptyset$. The other cases follow by exactly the same argument.

First consider the possibilities for $k$. Since $V(k)\cap V(j)\neq \emptyset$ and $V(k)\cap V(i)\neq \emptyset$, 
$k$ has one edge incident to $i$ and one edge incident to $j$ (recall that $N(i)\cap N(j)=\emptyset$). Each of these edges has at most $(4\Delta-2)$ possibilities, 
leaving at most $16\Delta^2$ possibilities for $k$.

Next consider the possibilities for $\ell$. Since $V(\ell)\cap V(j)\neq \emptyset$, $\ell$ must have one edge incident to $j$. 
This edge has at most $4\Delta-2$ possibilities, while the other one can be any other edge, having at most $m-1$. It follows that $\ell$ has at most $4\Delta m$ possibilities. 

Thus for the case $V(k) \cap V(j)\neq \emptyset$ we obtain a bound of $64\Delta^3 m$. 
Since the case {\color{red}$V(k)\cap V(j)\neq \emptyset$} is identical, we get a total bound of $128\Delta^3 m$.
It follows that $\cov(X^{(i)} - X, X^{(j)} - X) \leq 128\Delta^3 m$. 
The number of covariance terms for this subcase can be upper bounded by $6m_4(G)$, which is the number of covariance terms for all of Case 2. 
Thus the contribution to the sum from this subcase is upper bounded by $128\Delta^3 m \left( \frac{6m_4(G)}{m_2(G)^2} \right)$. 

Therefore adding up the contributions from the various cases gives an upper bound of
\bal
\Var(E[X^s- X \mid X]) &\leq \frac{4 \Delta^2 (m-1)^2 \left(m_2(G)^2- 6m_4(G) \right)}{m_2(G)^2} \\
&\qquad + \frac{2\Delta^2(m-1)^2 \left(m_2(G)(\Delta-1)^2(m-4) \right)}{m_2(G)^2} \\
&\qquad + 128\Delta^3 m \left( \frac{6m_4(G)}{m_2(G)^2} \right) \\
&\leq 4\Delta^2 m^2 \left( 1 + \left(\frac{32 \Delta}{m} - 1  \right)\frac{6m_4(G)}{m_2(G)^2} + \frac{\Delta^2m}{2m_2(G)} \right ),
\eal
as desired. 
\end{proof}





\subsection{Proof of the Main Theorem}

We are now in a position to prove our main result. 

\begin{proof}[Proof of Theorem \ref{maintheorem}]
Let $(X, X^s)$ be the size-bias coupling defined in Section \ref{sizebiascouplingconstruction} so that by Proposition \ref{couplingconstruction} $X^s$ has the size-bias distribution of $X$. 
Recall that $|X^s - X| \leq 2\Delta (m-1) \leq 2\Delta m$, so that $(X^s, X)$ is a bounded size-bias coupling. 
By Theorem \ref{crossingsmeanvariance}, $\mu = \frac{m_2(G)}{3}$. 

Therefore by Lemma \ref{lemmaVariance} and Theorem \ref{sizebiasStein}, 
\bal
&d_K(W, Z) \leq \frac{6m_2(G) \cdot 4\Delta^2 m^2}{3\sigma^3} \\
& + \frac{2m_2(G) \cdot 2 \Delta m}{3\sigma^2} \sqrt{1 + \left( \frac{32 \Delta}{m} - 1 \right)\frac{6m_4(G)}{m_2(G)^2} + \frac{(\Delta-1)^2(m-4)}{2m_2(G)} } \\
&\leq \frac{8\Delta^2 m^2 m_2(G)}{\sigma^3} + \frac{4\Delta m m _2(G)}{3 \sigma^2} \sqrt{1 + \left( \frac{32 \Delta}{m} - 1 \right)\frac{6m_4(G)}{m_2(G)^2} + \frac{\Delta^2 m}{2m_2(G)} } \\
&= \frac{4 \Delta m m_2(G)}{3\sigma^2} \left( \frac{6 \Delta m}{\sigma} + \sqrt{1 + \left( \frac{32 \Delta}{m} - 1 \right)\frac{6m_4(G)}{m_2(G)^2} + \frac{\Delta^2 m}{2m_2(G)} } \right). \qedhere
\eal
\end{proof}

\section{Examples} \label{Examples}

In this section we apply Theorem \ref{maintheorem} to prove central limit theorems, along with convergence rates, 
for the number of crossings in matchings, path graphs, cycle graphs, and the disjoint union of $n$ triangles. 
Throughout this section, let $X$ be the number of crossings in a random embedding of $G$, the graph under consideration. 

\subsection{Matchings} \label{matchings_example}
Let $G$ be a disjoint union of $n$ copies of $K_2$ graphs. This is called a {\em matching} on $n$ vertices or a {\em chord diagram} of size $n$. Chord diagrams have found applications in a wide range of fields such as topology, biology, physics, and free probability \cite{APRW13, BNS18, BR00, CP13, HZ86, Kon93, LN11, MY13, NS06}. 

The central limit theorem for the number of crossings in a random chord diagram, without a convergence rate, was first proven by Flajolet and Noy \cite{FN00} using generating functions and analytic combinatorics. 
We apply Theorem \ref{maintheorem} to obtain the first convergence rate result for this central limit theorem.

The expected number of crossings is given by
\bal
EX=\frac{1}{3} m_2(G)=\frac{n(n-1)}{6}.
\eal
For the variance, we only need to consider $m_2(G)$, $m_4(G)$ and $m_3(G)$ since the other types of subgraphs are not present in $G$. 
The number of $r$-matchings is given by  $m_r(G)=\binom{n}{r}$, since any choice of $r$ edges constitutes an $r$-matching. Thus we get 
\bal
E(X^2)=\frac{6}{9}\binom{n}{4}+\frac{12}{15}\binom{n}{3}+\frac{1}{3}\binom{n}{2}=\frac{n^4}{36} - \frac{n^3}{30} + \frac{13 n^2}{180} - \frac{n}{15},
\eal
and so the variance is 
\bal
\mathrm{Var}(X)=E(X^2)-(EX)^2=\frac{n(n-1)(n+3)}{45},
\eal
which is larger that $n^3/45$ for all $n \geq 2$. Since $\Delta =1$ and $m = n$, we have that
\bal
\frac{4 \Delta m m_2(G)}{3\sigma^2}  \leq \frac{30(n-1)}{n} \leq 30,
\eal
\bal
\frac{6 \Delta m}{\sigma} \leq \frac{18\sqrt{5}}{\sqrt{n}},
\eal
and
\bal
1 + \left( \frac{32 \Delta}{m} - 1 \right)\frac{6m_4(G)}{m_2(G)^2} + \frac{\Delta^2 m}{2m_2(G)} &= \frac{63}{n-1} - \frac{26}{n} - \frac{192}{n^2} \leq \frac{37}{n}
\eal
for all $n \geq 2$.

Therefore by Theorem \ref{maintheorem}, 
\bal
d_K(W,Z) \leq  30 \left( \frac{18\sqrt{5}}{\sqrt{n}} + \sqrt{ \frac{37}{n}} \right) \leq \frac{1390}{\sqrt{n}}.
\eal

\begin{rem}
In the case of matchings, one can actually calculate precisely the contribution of Case 2 in the proof of Lemma \ref{lemmaVariance}. This is done by a detailed  analysis of all the possibilities of the four $2$-matchings involved in the proof. This yields the bound
$$\Var(E[X^s- X \mid X]) \leq \frac{3020n}{189} + \frac{37792}{525(n-1)} + \frac{37672}{1575n} + \frac{151664}{4725} \approx16n.$$ 
Using this quantity instead, one can slightly improve our bound to $1268n^{-1/2}$ for $n$ sufficiently large. 
\end{rem}

\subsection{Paths} \label{path_example}
Let $P_n$ be the path graph on $n$ vertices. The number of $r$-matchings is $m_r(P_n) = \binom{n-r}{r}$, so that 
\bal
\begin{array}{ccccc}
& m_2=\binom{n-2}{2},  &   m_3=\binom{n-3}{3}, & m_4= \binom{n-4}{4}, & S_2= 3 \binom{n-4}{3},  \\
& S_4 = \binom{n-4}{2}, & S_5 = 2\binom{n-4}{2}, & S_6 = n-4, & S_7 = 2 \binom{n-3}{2}.
\end{array}
\eal
Then
\bal
E(X^2)=\frac{n^4}{36}- \frac{23n^3}{90} +  \frac{35n^2}{36}-\frac{86 n}{45}- \frac{5}{3},
\eal
and so the variance is 
\bal
\Var(X) = E(X^2) -(EX)^2 = \frac{n^3}{45}-\frac{n^2}{18}-\frac{11n}{45}+\frac{2}{3},
\eal
which is larger that $n^3/60$ for $n \geq 14$. Moreover $\Delta =2$ and $m = n-1$, so that 
\bal
\frac{4 \Delta m m_2(G)}{3\sigma^2}  \leq \frac{80 (n - 3) (n - 2) (n - 1)}{n^3} \leq 80,
\eal
\bal
 \frac{6 \Delta m}{\sigma} \leq \frac{24 \sqrt{15} (n - 1)}{n\sqrt{n}} \leq \frac{24\sqrt{15}}{\sqrt{n}},
\eal
and
\bal
&1+  \left( \frac{32 \Delta}{m} - 1 \right) \frac{6m_4(G)}{m_2(G)^2} + \frac{\Delta^2 m}{2m_2(G)}  \\
&\qquad = \frac{163 n^4 - 3264 n^3 + 25025 n^2 - 86478 n + 111696}{2 (n - 3)^2 (n - 2)^2 (n - 1)}  \\
& \qquad \leq \frac{163}{2n}.
\eal
Thus by Theorem \ref{maintheorem}, 
\bal
d_K(W,Z) \leq  80  \left(\frac{24\sqrt{15}}{\sqrt{n}} + \sqrt{\frac{163}{2n}} \right) \leq \frac{8159}{\sqrt{n}}.
\eal

\subsection{Cycles} \label{cycle_example}
Let $C_n$ be the cycle graph on $n$ vertices. The number of $r$-matchings is $m_r(C_n)=\frac{n}{r}\binom{n-r-1}{r-1}$, then
$$EX=\frac{m_2}{3}=\frac{n(n-3)}{6}.$$
The subgraph counts are
\bal
\begin{array}{ccccc}
& m_2=\frac{n(n-3)}{2}, & m_3=\frac{n}{3}\binom{n-4}{2}, & m_4= \frac{n}{4}\binom{n-5}{3}, & S_2= n \binom{n-5}{2}, \\
& S_4 =\frac{n(n-5)}{2}, & S_5 = n(n-5), & S_6 = n, & S_7 = n(n-4).
\end{array}
\eal
The second moment is
\bal
E(X^2)=\frac{n^4}{36} - \frac{13 n^3}{90} + \frac{47 n^2}{180} - \frac{n}{3},
\eal
and so the variance is 
\bal
\Var(X) = E(X^2) -(EX)^2 = \frac{n^3}{45}-\frac{ n^2}{90} - \frac{n}{3},
\eal
which is larger that $n^3/50$ for $n \geq 15$. Moreover $\Delta =2$ and $m = n$. We get 
\bal
\frac{4 \Delta m m_2(G)}{3\sigma^2}  \leq \frac{200 (n - 3)}{3n} \leq \frac{200}{3},
\eal
\bal
 \frac{6 \Delta m}{\sigma} \leq \frac{60\sqrt{2}}{\sqrt{n}},
\eal
and
\bal
&1+\left( \frac{32 \Delta}{m} - 1 \right) \frac{6m_4(G)}{m_2(G)^2} + \frac{\Delta^2 m}{2m_2(G)} \\
& \qquad = \frac{163 n^4 - 2613 n^3 + 14711 n^2 - 28251 n + 630}{2 (n - 3)^2 (n - 1)n^2}   \\
& \qquad \leq \frac{163}{2n}.
\eal
By Theorem \ref{maintheorem}, 
\bal
d_K(W,Z) \leq  \frac{200}{3}  \left(\frac{60\sqrt{2}}{\sqrt{n}} + \sqrt{\frac{163
}{2n}} \right) \leq \frac{6259}{\sqrt{n}}.
\eal

\subsection{Triangles} \label{triangle_example}
Let $G$ be the disjoint union of $n$ copies of $K_3$. 
In this case, the subgraphs of type $S_5$ and $S_6$ are not present in $G$. To obtain an $r$-matching for $r \geq 2$, we first choose $r$ triangles, 
and for each triangle we choose one of the three edges to include in the $r$-matching. Thus the number of $r$-matchings is $m_r(G)=3^r \binom{n}{r}$. This gives
\bal
\begin{array}{ccccc}
& m_2 = 3^2 \binom{n}{2}, & m_3 = 3^3 \binom{n}{3}, & m_4 = 3^4 \binom{n}{4} \\
& S_2 = 3^4 \binom{n}{3}, & S_4  = 3^2 \binom{n}{2}, & S_7 = 2 \cdot 3^2 \binom{n}{2}.
\end{array}
\eal
The expectation and second moment are given by
\bal
EX=\frac{1}{3} m_2(G)=\frac{3n(n-1)}{2}, \qquad E(X^2) =  \frac{9 n^4}{4} - \frac{39n^3}{10} + \frac{51n^2}{20} - \frac{9n}{10},
\eal
and so the variance is 
\bal
\Var(X) = E(X^2) - (EX)^2= \frac{3n^3}{5}+\frac{3n^2}{10}-\frac{9n}{10},
\eal
which is larger than $3n^3/5$ for $n \geq 3$. We also have that $\Delta = 2$ and $m = 3n$. Thus
\bal
\frac{4 \Delta m m_2(G)}{3\sigma^2}  \leq \frac{60(n - 1)}{n} \leq 60,
\eal
\bal
 \frac{6 \Delta m}{\sigma} \leq \frac{12\sqrt{15}}{\sqrt{n}},
\eal
and
\bal
1+\left( \frac{32 \Delta}{m} - 1 \right)  \frac{6m_4(G)}{m_2(G)^2} + \frac{\Delta^2 m}{2m_2(G)} &= \frac{419n^3-2006n^2+2373n-18}{6 (n - 1)^2 n^2}  \\
&\leq \frac{419}{6n}.
\eal
Finally by Theorem \ref{maintheorem}, 
\bal
d_K(W,Z) &\leq  60  \left(\frac{12\sqrt{15}}{\sqrt{n}} + \sqrt{\frac{35}{6n}} \right) \leq \frac{3290}{\sqrt{n}}.
\eal

\section{Another Possible Limit} \label{AnotherLimit}

We consider an example which shows that not every sequence of graphs satisfies a central limit theorem for the number of crossings, 
even if the variance is not always $0$ and that having $m_2$ going to infinity is not enough. Moreover, it shows that we can have another type of limit for the number of crossings.

Consider the graph $G_n$ which consists of a star graph with $n-1$ vertices, with an edge attached to one of its leaves; see Figure \ref{kite}(a). 
Let $X_n$ be the number of crossings in a random embedding of $G_n$
Note that in this case, $m = n-1$, $m_2 = n-3$, and $m_4 = 0$, and the only other term appearing in the calculation of $E (X_n^2)$ is  $S_7$, which for this graph equals $\binom{n-3}{2}$. 
Also note that $\Delta = n-2$. This gives 
\bal
E(X_n)=\frac{n-3}{3} \qquad \text{and} \qquad E(X_n^2)=\frac{(n-2)(n-3)}{6},
\eal
from which it follows that $\sigma^2=n(n-3)/18$. 

By Theorem \ref{maintheorem}, the upper bound on the Kolmogorov distance between the standardized number of crossings, $W_n$, and a standard normal, $Z$, is given by 
\bal
d_K(W_n, Z) &\leq \frac{4(n-2)(n-1)(n-3)}{3\left( \frac{n(n-3)}{18} \right)}\left( \frac{6(n-2)(n-1)}{\sqrt{\frac{n(n-3)}{18}}} + \sqrt{1 + \frac{(n-2)^2(n-1)}{2(n-3)}} \right) \\
&= \frac{24(n-1)(n-2)}{n}\left( \frac{18\sqrt{2} (n-1)(n-2)}{\sqrt{n(n-3)}} + \sqrt{1 + \frac{(n-1)(n-2)^2}{2(n-3)}} \right) \\
&= O(n^2), 
\eal
which diverges to infinity as $n \to \infty$. 

On the other hand, we can explicitly calculate the probability of having exactly $k$ crossings. Let $v_0$ denote the central vertex of the star graph, let $v_n$ be the vertex at distance $2$ from $v_0$, 
and let $v_{n-1}$ be the vertex between $v_0$ and $v_n$. The number of crossings in a random embedding of $G_n$ depends only on the positions of these three vertices. 
More precisely, there will be exactly $k$ crossings if the following two conditions are satisfied (see Figure \ref{kite}(b) for an example):
\begin{enumerate}
\item There are exactly $k$ and $n-2-k$ vertices on the two arcs which are defined when removing $v_n$ and $v_{n-1}$.
\item Vertex $v_0$ is on the arc containing the $n-2-k$ vertices.
\end{enumerate}

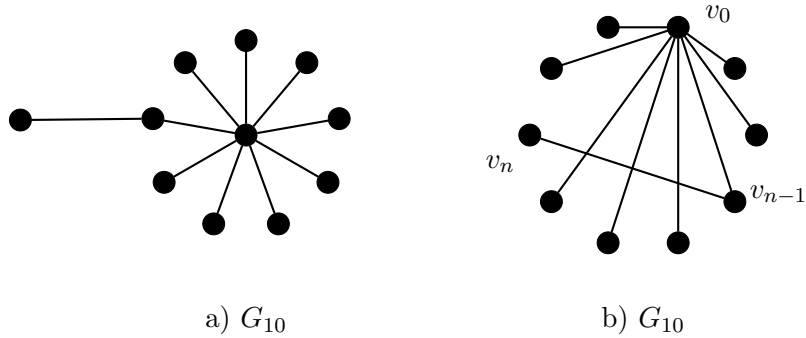
\begin{figure}[ht]
\centering 
\begin{tikzpicture}
[mystyle/.style={scale=0.8, draw,shape=circle,fill=black}]
\def\ngon{9}
\node[regular polygon,regular polygon sides=\ngon,minimum size=2.5cm] (p) {};
\foreach\x in {1,...,\ngon}{\node[mystyle] (p\x) at (p.corner \x){};}
\node[mystyle] (p0) at (0,0) {};
\foreach\x in {1,...,\ngon}
{
 \draw[thick] (p0) -- (p\x);
}
\node[mystyle] (p00) at (-3,.2) {};
{
 \draw[thick] (p00) -- (p3);
}

\node [label=below:a) $G_{10}$] (*) at (0,-2) {};
 \end{tikzpicture}
\qquad \qquad
\begin{tikzpicture}
[mystyle/.style={scale=0.8, draw,shape=circle,fill=black}]
\def\ngon{10}
\node[regular polygon,regular polygon sides=\ngon,minimum size=3cm] (p) {};
\foreach\x in {1,...,\ngon}{\node[mystyle] (p\x) at (p.corner \x){};}

\foreach\x in {5,...,\ngon}
{
 \draw[thick] (p1) -- (p\x);
}

\foreach\x in {1,...,3}{
 \draw[thick] (p1.center) -- (p\x.center);
}

\draw[thick] (p4) -- (p8);

\node [label=below: $v_{0}$] (*) at (1,2) {};
\node [label=below: $v_{n-1}$] (*) at (1.8,-.4) {};
\node [label=below: $v_{n}$] (*) at (-1.9,0) {};

\node [label=below:b) $G_{10}$] (*) at (0,-2) {};
 \end{tikzpicture}
 
\caption{(a) The graph $G_{10}$ drawn without crossings. (b) The graph $G_{10}$ drawn in convex position with $3$ crossings formed by the edge $(v_{n-1},v_{n})$ and edges $(v_0,v_i)$, 
where $v_i$ are between $v_{n-1}$ and $v_{n}$ in the cyclic order.} 
\end{figure} \label{kite}

By a simple counting argument, since all permutations occur with equal probability, we have that the two conditions above are satisfied with probability
\bal
P(X_n = k) = \frac{2(n-2-k)}{(n-1)(n-2)}, \qquad \text{for $k=0,\ldots,n-2$}.
\eal

Define $Y_n := \frac{X_n}{n}$. Then
\bal
P\left(Y_n = \frac{k}{n}\right) = \frac{2(n-2-k)}{(n-1)(n-2)} \approx \frac{2}{n}\left(1-\frac{k}{n}\right), \qquad \text{for $k=0,\ldots,n-2$},
\eal
which implies that $Y_n \to Y$ in distribution as $n \to \infty$, where $Y$ is a random variable supported on $(0,1)$ with density $f_Y(x)=2(1-x)$. 

\section{Final Remarks}\label{FinalRemarks}

There are various problems where one can try to apply Stein's method to prove rates of convergence for limit theorems. We mention a few of them here.

\subsection{} Feray \cite{Fer18} introduced the theory of weighted dependency graphs and gave an asymptotic normality criterion in this context. 
As an application, he gave another proof of Flajolet and Noy's central limit theorem for the number of crossings in a random chord diagram. 
It would be interesting to see if there is a Stein's method version for weighted dependency graphs, analogous to the one for regular dependency graphs developed by Baldi and Rinott in \cite{BR89}. 

In another direction, Feray \cite{Fer18} states that weighted dependency graphs can be used to prove a central limit theorem for the number of $k$-crossings. 
It should be possible to use Stein's method to obtain a rate of convergence. 

\subsection{} Chern, Diaconis, Kane, and Rhoades \cite{CDKR15} showed that the number of crossings in a uniformly random set partition of $[n]$ is asymptotically normal. 
Note that a chord diagram is the special case corresponding to a partition of $[2n]$ into $n$ blocks, each of size $2$. 
More recently, Feray \cite{Fer20} generalized this result and used weighted dependency graphs to prove central limit theorems for the number of occurrences of any fixed pattern in multiset permutations and set partitions. 
Is it possible to use Stein's method to obtain rates of convergence?

\subsection{} Fix $n \geq 1$, which will denote the number of vertices in a random graph. Let $\bm{d} = (d_1,\ldots,d_n)$, with $d_i \geq 1$ for all $i \in [n]$, be a degree sequence. 
That is, vertex $i$ has degree $d_i$. Consider $2m$ half-edges (so that two half-edges can be connected to form a single edge), where $2m = \sum_{i=1}^n d_i$, and perform a random matching of these half-edges. 
The resulting random graph $\CM_n(\bm{d})$ is called the {\em configuration model with degree sequence $\bm{d}$} (see \cite{Hof17}, Ch. 7). 
Note that random chord diagrams correspond to the special case where the number of vertices is $2n$ and the degree sequence consists of all $1$'s. 
What is the asymptotic distribution of the number of crossings in the configuration model?


\section*{Acknowledgements}

JEP thanks Jason Fulman for suggesting the problem of obtaining a rate of convergence for the Flajolet-Noy CLT, introducing him to Stein's method, and providing many useful suggestions and references. 
He also thanks Carina Betken for a helpful conversation about size-bias couplings and introducing him to the configuration model.  
Finally he thanks Mehdi Talbi, Gin Park, Apoorva Shah, and Greg Terlov for several stimulating discussions. 

SAV and OA thank Larry Goldstein for various communications during the preparation of this paper. OA would like to thank Clemens Huemer for initial discussions on the topic that led to work in this problem.

SAV was supported by a scholarship from CONACYT.
OA was supported by CONACYT Grant CB-2017-2018-A1-S-9764.


\printbibliography[title=References]

@article{MY13,
	author = {N. Marie and K. Yeats},
	date-added = {2023-08-21 18:44:32 -0400},
	date-modified = {2023-08-21 18:45:26 -0400},
	journal = {Communications in Number Theory and Physics},
	number = {2},
	pages = {251-291},
	title = {A chord diagram expansion coming from some Dyson-Schwinger equations},
	volume = {7},
	year = {2013}}

@article{CP13,
	author = {S. Chmutov and B. Pittel},
	date-added = {2023-08-21 18:27:06 -0400},
	date-modified = {2023-08-21 18:28:15 -0400},
	journal = {Journal of Combinatorial Theory, Series A},
	number = {1},
	pages = {102-110},
	title = {The genus of a random chord diagram is asymptotically normal},
	volume = {120},
	year = {2013}}

@article{BNS18,
	author = {M. Berkooz and P. Narayan and J. Sim\'{o}n},
	date-added = {2023-08-21 18:23:38 -0400},
	date-modified = {2023-08-21 18:36:52 -0400},
	journal = {Journal of High Energy Physics},
	number = {192},
	title = {Chord diagrams, exact correlators in spin glasses and black hole bulk reconstruction},
	year = {2018}}

@article{AA23,
	author = {S. Arenas-Velilla and O. Arizmendi},
	date-added = {2023-08-15 18:51:52 -0400},
	date-modified = {2023-08-15 19:09:54 -0400},
	journal = {Statistics \& Probability Letters},
	number = {doi: 10.1016/j.spl.2023.109916.},
	title = {Convergence rate for the number of crossings in a random labelled tree},
	year = {2023}}

@article{Tur77,
	author = {Pal Tur\'{a}n},
	date-added = {2023-08-14 16:20:29 -0400},
	date-modified = {2023-08-14 16:21:14 -0400},
	journal = {Journal of Graph Theory},
	number = {1},
	pages = {7-9},
	title = {A note of welcome},
	volume = {1},
	year = {1977}}

@article{Moo65,
	author = {J. W. Moon},
	date-added = {2023-08-11 14:45:37 -0400},
	date-modified = {2023-08-11 14:49:44 -0400},
	journal = {Journal of the Society for Industrial and Applied Mathematics},
	number = {2},
	pages = {506-510},
	title = {On the distribution of crossings in random complete graphs},
	volume = {13},
	year = {1965}}

@unpublished{ACH19,
	author = {O. Arizmendi and  P. Cano and C. Huemer},
	date-added = {2023-08-11 14:32:01 -0400},
	date-modified = {2023-08-11 14:33:57 -0400},
	note = {arXiv:1902.05223},
	title = {On the number of crossings in a random labelled tree with vertices in convex position},
	year = {2019}}

@article{GJ83,
	author = {M. R. Garey and D. S. Johnson},
	date-added = {2023-08-11 14:23:47 -0400},
	date-modified = {2023-08-11 14:25:29 -0400},
	journal = {SIAM Journal on Algebraic and Discrete Methods},
	number = {3},
	pages = {312-316},
	title = {Crossing number is NP-complete},
	volume = {4},
	year = {1983}}

@article{Sch13,
	author = {M. Schaefer},
	date-added = {2023-08-11 14:22:10 -0400},
	date-modified = {2023-08-11 14:23:06 -0400},
	journal = {Electronic Journal of Combinatorics},
	number = {DS21},
	title = {The graph crossing number and its variants: A survey},
	year = {2013}}

@article{ST02,
	author = {J. Spencer and G. T\'{o}th},
	date-added = {2023-08-11 14:19:25 -0400},
	date-modified = {2023-08-14 17:03:39 -0400},
	journal = {Random Structures \& Algorithms},
	number = {3-4},
	pages = {347-358},
	title = {Crossing numbers of random graphs},
	volume = {21},
	year = {2002}}

@book{CGS11,
	author = {L. H. Y. Chen and L. Goldstein and Q. M. Shao},
	date-added = {2023-03-08 13:52:00 -0800},
	date-modified = {2023-03-08 13:53:38 -0800},
	publisher = {Springer Berlin, Heidelberg},
	series = {Probability and Its Applications},
	title = {Normal Approximation by Stein's Method},
	year = {2011}}

@book{Hof17,
	address = {Cambridge},
	author = {R. van der Hofstad},
	date-added = {2023-02-24 12:55:55 -0800},
	date-modified = {2023-02-24 12:56:57 -0800},
	publisher = {Cambridge University Press},
	series = {Cambridge Series in Statistical and Probabilistic Mathematics},
	title = {Random Graphs and Complex Networks, Volume 1},
	year = {2017}}

@article{Fer20,
	author = {V. Feray},
	date-added = {2023-02-24 12:54:50 -0800},
	date-modified = {2023-02-24 12:55:27 -0800},
	journal = {Annals of Applied Probability},
	number = {1},
	pages = {287-323},
	title = {Central limit theorems for patterns in multiset permutations and set partitions},
	volume = {30},
	year = {2020}}

@article{CDKR15,
	author = {B. Chern and P. Diaconis and D. M. Kane and R. C. Rhoades},
	date-added = {2023-02-24 12:54:00 -0800},
	date-modified = {2023-02-27 08:42:22 -0800},
	journal = {Advances in Applied Mathematics},
	pages = {92-105},
	title = {Central limit theorems for some set partition statistics},
	volume = {70},
	year = {2015}}

@article{Fer18,
	author = {V. Feray},
	date-added = {2023-02-24 12:37:35 -0800},
	date-modified = {2023-02-24 12:38:57 -0800},
	journal = {Electronic Journal of Probability},
	number = {93},
	pages = {1-65},
	title = {Weighted dependency graphs},
	volume = {23},
	year = {2018}}

@book{NS06,
	address = {Cambridge},
	author = {A. Nica and R. Speicher},
	date-added = {2023-02-24 12:36:09 -0800},
	date-modified = {2023-02-24 12:37:24 -0800},
	publisher = {London Mathematical Society},
	series = {Lecture Notes Series},
	title = {Lectures on the Combinatorics of Free Probability},
	volume = {335},
	year = {2006}}

@article{LN11,
	author = {N. Linial and T. Nowik},
	date-added = {2023-02-24 12:34:00 -0800},
	date-modified = {2023-02-24 12:34:43 -0800},
	journal = {Discrete and Computational Geometry},
	number = {1},
	pages = {161-180},
	title = {The expected genus of a random chord diagram},
	volume = {45},
	year = {2011}}

@article{Kon93,
	author = {M. Kontsevich},
	date-added = {2023-02-24 12:33:20 -0800},
	date-modified = {2023-02-24 12:33:52 -0800},
	journal = {Advances in Soviet Mathematics},
	number = {2},
	pages = {137-150},
	title = {Vassiliev's knot invariants},
	volume = {16},
	year = {1993}}

@article{HZ86,
	author = {J. Harer and D. Zagier},
	date-added = {2023-02-24 12:32:35 -0800},
	date-modified = {2023-02-24 12:33:12 -0800},
	journal = {Inventiones Mathematicae},
	pages = {457-486},
	title = {The Euler characteristic of the moduli spaces of curves},
	volume = {85},
	year = {1986}}

@article{BR00,
	author = {B. Bollobas and O. Riordan},
	date-added = {2023-02-24 12:30:53 -0800},
	date-modified = {2023-02-24 12:31:39 -0800},
	journal = {Journal of Knot Theory and Its Ramifications},
	pages = {847-853},
	title = {Linearized chord diagrams and an upper bound for Vassiliev invariants},
	volume = {9},
	year = {2000}}

@article{APRW13,
	author = {J. E. Anderson and R. C. Penner and C. M. Reidys and M. S. Waterman},
	date-added = {2023-02-24 12:29:50 -0800},
	date-modified = {2023-02-24 12:30:38 -0800},
	journal = {Journal of Mathematical Biology},
	number = {5},
	pages = {1261-1278},
	title = {Topological classification and enumeration of RNA structures by genus},
	volume = {67},
	year = {2013}}

@conference{FN00,
	author = {P. Flajolet and M. Noy},
	date-added = {2023-02-24 12:25:31 -0800},
	date-modified = {2023-02-24 12:26:48 -0800},
	organization = {Formal Power Series and Algebraic Combinatorics},
	pages = {191-201},
	publisher = {Springer},
	title = {Analytic combinatorics of chord diagrams},
	year = {2000}}

@article{Rio75,
	author = {J. Riordan},
	date-added = {2023-02-24 12:23:54 -0800},
	date-modified = {2023-02-24 12:24:35 -0800},
	journal = {Mathematics of Computation},
	number = {129},
	pages = {215-222},
	title = {The distribution of crossings of chords joining pairs of 2n points on a circle},
	volume = {29},
	year = {1975}}

@article{BR89,
	author = {P. Baldi and Y. Rinott},
	date-added = {2023-02-24 09:51:52 -0800},
	date-modified = {2023-02-24 09:53:01 -0800},
	journal = {Annals of Probability},
	number = {4},
	pages = {1646-1650},
	title = {On normal approximations of distributions in terms of dependency graphs},
	volume = {17},
	year = {1989}}

@article{Ros11,
	author = {N. Ross},
	date-added = {2023-02-24 09:39:51 -0800},
	date-modified = {2023-02-24 09:40:29 -0800},
	journal = {Probability Surveys},
	pages = {210-293},
	title = {Fundamentals of Stein's method},
	volume = {8},
	year = {2011}}

\Address

\end{document}